\theoremstyle{plain}
\newtheorem{thm}{Theorem}[section]
\newtheorem{cor}[thm]{Corollary}
\newtheorem{lem}[thm]{Lemma}
\newtheorem{prop}[thm]{Proposition}
\newtheorem*{thm*}{Theorem}
\newtheorem*{lem*}{Lemma}
\theoremstyle{definition}
\newtheorem{df}[thm]{Definition}
\newtheorem{rem}[thm]{Remark}
\newtheorem{ex}[thm]{Example}
\newtheorem*{df*}{Definition}
\newtheorem*{ex*}{Example}
\newtheorem*{rem*}{Remark}
\theoremstyle{remark}
\newcommand\C{{\mathbb C}}
\newcommand\E{{\mathbb E}}
\renewcommand\H{{\mathbb H}}
\newcommand\R{{\mathbb R}}
\newcommand\Z{{\mathbb Z}}
\newcommand\cM{{\mathcal M}}
\newcommand\cW{{\mathcal W}}
\DeclareMathOperator{\row}{row}
\DeclareMathOperator{\Jac}{Jac}
\DeclareMathOperator{\rank}{rank}
\title{The Jacobian of a Sixth-Root-of-Unity Matroid}
\author{Matthew Baker}
\email{mbaker@math.gatech.edu}
\address{School of Mathematics, Georgia Institute of Technology \\ Atlanta, Georgia 30332-0160, USA}
\author{Changxin Ding}
\email{cding66@gatech.edu}
\address{School of Mathematics, Georgia Institute of Technology \\ Atlanta, Georgia 30332-0160, USA}
\author{Xu Zhuang}
\email{xzhuang8@uci.edu}
\address{Department of Mathematics, University of California-Irvine\\ Irvine, CA 92697, USA}
\begin{document}
\maketitle
\begin{abstract}
The Jacobian group (also called the sandpile group, Picard group, or critical group) of a graph or, more generally, of a regular matroid has been well studied. Sixth-root-of-unity matroids, also called complex unimodular matroids, are generalizations of regular matroids. This paper provides a definition, and establishes some basic properties, of the Jacobian group of a sixth-root-of-unity matroid. 
\end{abstract}

\section{Introduction}
The Jacobian group (also called the sandpile group, Picard group, or critical group) of a graph is a discrete analogue of the Jacobian of a Riemann surface. More generally, one can define the Jacobian group ${\rm Jac}_{\Z}(\cM)$ for any regular matroid $\cM$ in such a way that the Jacobian of a graph $G$ is equal to the Jacobian of the corresponding graphic matroid; see \cite[Section 4.6]{Merino} and \cite[Section 2.1]{BBY}. 

\medskip

Sixth-root-of-unity matroids (henceforth abbeviated as $\sqrt[6]{1}$-matroids) are a natural generalization of regular matroids.
More precisely, let $\mu_6$ be the group of sixth roots of unity in ${\mathbb C}$ and let 
$\H=\mu_6 \cup\{0\}$. 
A $\sqrt[6]{1}$-matroid is a matroid $\cM$ which is representable over ${\mathbb C}$ by a matrix $M$ with the property that every subdeterminant of $M$ lies in $\H$. (This is non-obviously equivalent to requiring that $\cM$ is representable over $\mathbb{C}$ by a matrix for which every $r\times r$ subdeterminant is either zero or else of modulus $1$, so $\sqrt[6]{1}$-matroids are also called complex unimodular matroids; see Remark~\ref{rem:SRUequivalence}.)

\medskip

Given such a matrix $M$, which we refer to as a {\em representation} of $\cM$, we define a corresponding $\E$-module ${\rm Jac}(M)$, where $\E$ is the ring of Eisenstein integers. We call $\Jac(M)$ the {\em Jacobian} of the representation $M$. We show that the order of $\Jac(M)$, considered as an abelian group, is equal to the square of the number of bases of $\cM$, and that $\Jac(M)$ only depends on the {\em equivalence class} of the representation $M$ (cf.~Definition~\ref{equivalent}). 
When $\cM$ is a regular matroid (and hence also a $\sqrt[6]{1}$-matroid), any two representations of $\cM$ are equivalent and thus the Jacobians of any two representations of $\cM$ are isomorphic. It therefore makes sense, in this case, to speak of the Jacobian of $\cM$, and we show that (as abelian groups) $\Jac(\cM) = \Jac_{\mathbb Z}(\cM) \oplus \Jac_{\mathbb Z}(\cM)$.

\medskip

Having made the definition, it is natural to wonder to what extent the choice of a representation matters for $\sqrt[6]{1}$-matroids which are not regular. We show that if a $\sqrt[6]{1}$-matroid $\cM$ is 3-connected, then the Jacobians of any two representations of $\cM$ are isomorphic as $\E$-modules (and therefore {\em a posteriori} as $\Z$-modules), so it makes sense to speak of the Jacobian of $\cM$. However, we present an example 
showing that the Jacobian of a $\sqrt[6]{1}$-matroid is not well-defined in general, even as a $\Z$-module. 


\medskip

Our paper is organized as follows. In Section~\ref{basic}, we define $\sqrt[6]{1}$-matroids and discuss the important concept of unique representability in the context of such matroids. In Section~\ref{main section}, we define the Jacobian, prove the main result, and present the above-mentioned counterexample. In Section~\ref{minor section}, we study the order of the Jacobian, discuss the connection between the Jacobian of a $\sqrt[6]{1}$-matroid and the Jacobian of a regular matroid, and use a certain orthogonal projection operator $P$ to give an equivalent definition of the Jacobian. We also establish a formula expressing $P$ as an average over all bases of certain matrices related to fundamental cocircuits.
In Section~\ref{example}, we present several examples and computations. 

\section{Preliminaries}\label{basic}


Throughout the paper, $\cM$ is a matroid with ground set $E$, its rank is denoted by $r$, and $n=|E|$. Let $\omega=e^{2\pi i/6}$ be a primitive sixth root of unity, and let $\E=\Z[\omega]$ be the ring of \emph{Eisenstein integers}. All matrices in this paper will be over the complex numbers. If $M$ is a matrix, we denote by $\overline{M}$ its complex conjugate and by $M^{\text H}$ its conjugate transpose. 

\begin{df} Let $\H=\{z\in \mathbb{C}:z^6=1\}\cup\{0\}$. 
\begin{enumerate}
    \item A matrix $M$ is called an \emph{$\H$-matrix} if every subdeterminant of $M$ lies in $\H$. In particular, every entry of an $\H$-matrix is in $\H$.
    \item If a matroid $\cM$ is represented over $\mathbb{C}$ by an $\H$-matrix $M$, the matrix $M$ is called an \emph{$\H$-representation} of $\cM$, and the matroid $\cM$ is called a \emph{$\sqrt[6]{1}$-matroid}.
\end{enumerate}
\end{df}
\begin{rem} \label{rem:SRUequivalence}
A matroid $\cM$ is a $\sqrt[6]{1}$-matroid if and only if it is representable over $\mathbb{C}$ by a matrix for which every $r\times r$ subdeterminant is either zero or else of modulus $1$; see \cite[Theorem 8.13]{CHOE2004}. 
\end{rem}

\begin{rem}
From a modern point of view, one can consider $\H$ as a \emph{partial field} or, more generally, as a \emph{pasture}; see \cite{Baker-Oliver2020,Semple1996}. For the purposes of this paper, however, it suffices to consider $\H$ as a multiplicatively closed subset of $\C$.
\end{rem}

The unique representability of regular matroids is the key fact used to show that a regular matroid has a well-defined Jacobian group. Unique representability also plays an important role for $\sqrt[6]{1}$-matroids, so we recall the definition now.

\begin{df}\label{equivalent}
Two $\H$-representations of $\cM$ are said to be \emph{equivalent} if one can be obtained from the other by a sequence of the following operations: multiplying a row or a column by a sixth root of unity; interchanging two rows; interchanging two columns (together with their labels); or pivoting (cf.~\cite[p. 9]{Semple1996}) on a nonzero element. 
\end{df}

\begin{df}\label{unique representability}
Let $\cM$ be a $\sqrt[6]{1}$-matroid. If for any two $r\times n$ $\H$-representations $M_1$ and $M_2$, $M_1$ is equivalent to either $M_2$ or $\overline{M_2}$, then we say $\cM$ is \emph{uniquely representable over $\H$}. 
\end{df}

Regular matroids are a special case of $\sqrt[6]{1}$-matroids. It is well-known that they are uniquely representable over any partial field (or even, more generally, over any pasture). The special case of this fact needed in this paper is:
\begin{thm}[\cite{Baker-Oliver2020,Semple1996}]\label{regular}
Regular matroids are uniquely representable over $\H$. 
\end{thm}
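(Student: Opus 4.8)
The plan is to deduce the statement from the general principle that regular matroids are uniquely representable over every partial field, specialized to the partial field $\H$, and to make the argument explicit enough to see why complex conjugation enters only vacuously. First I would reduce to the case in which $\cM$ is connected: an arbitrary $\H$-representation of a direct sum of matroids can be brought by row and column interchanges and pivoting into block-diagonal form, after which one argues one connected component at a time. For connected $\cM$ I would fix a basis $B$ and use pivoting to put each of the two $\H$-representations $M_1, M_2$ into the standard form $[\,I_r \mid D_i\,]$ with respect to $B$, with matching column labels. Since the underlying matroid is fixed, the supports of $D_1$ and $D_2$ agree: the $(i,j)$ entry is nonzero precisely when $B \,\triangle\, \{i,j\}$ is a basis. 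These supports are recorded by the bipartite \emph{basis exchange graph} $G_B$ on the rows and the non-basis columns, and connectivity of $\cM$ guarantees that $G_B$ is connected.

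Next I would exploit the scaling operations. Multiplying a row by a sixth root of unity together with its compensating basis column, and multiplying a non-basis column by a sixth root of unity, transforms $D_{ij} \mapsto t_i s_j D_{ij}$ with units $t_i, s_j \in \mu_6$; since every nonzero entry of an $\H$-matrix lies in $\mu_6$, these moves stay within $\H$-representations. Choosing a spanning tree $T$ of the connected graph $G_B$, I would use exactly these scalings to normalize to $1$ every entry of $D_i$ indexed by an edge of $T$. Because $T$ has one fewer edge than $G_B$ has vertices, this exhausts the scaling freedom up to a single global scalar, so after normalization the representation is rigidly determined by its remaining, non-tree entries, equivalently by the cross-ratios along the fundamental cycles of $G_B$.

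The heart of the matter is to show that these cross-ratios are all trivial, forcing $D_1 = D_2$. Here I would use that $\H$ has essentially no nontrivial fundamental elements: the only $p \in \mu_6$ with $1 - p \in \mu_6$ are $p = \omega$ and $p = \overline{\omega} = \omega^{-1}$, and these are interchanged by complex conjugation. A four-cycle $i, j, i', j'$ of $G_B$ whose cross-ratio $D_{ij} D_{i'j'} / (D_{ij'} D_{i'j})$ differs from $1$ produces, on rows $\{i,i'\}$ and columns $\{i,i',j,j'\}$, the minor $\cM / (B \setminus \{i,i'\}) \restriction \{i,i',j,j'\}$ represented by four pairwise non-parallel points of $\mathbb{P}^1$, i.e.\ a $U_{2,4}$-minor; but $U_{2,4}$ is not binary, so a regular matroid has none. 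To pass from four-element minors to arbitrary fundamental cycles I would invoke the homotopy-theoretic reduction underlying the partial-field machinery of \cite{Semple1996, Baker-Oliver2020}, which reduces global equivalence of representations to the local data of small minors. Thus every cross-ratio equals $1$; combined with the tree normalization this forces every entry of the normalized $D_i$ to equal $1$ on its support, whence $D_1 = D_2$ and $M_1$ is equivalent to $M_2$. Since the resulting normal form has entries in $\{0, \pm 1\}$, a set fixed by conjugation, no conjugation is actually needed here—it is written into Definition~\ref{unique representability} only for the general $\sqrt[6]{1}$ theory—so in particular $M_1$ is equivalent to $M_2$ or $\overline{M_2}$.

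I expect the main obstacle to be precisely this cross-ratio rigidity: converting ``a nontrivial cross-ratio produces a non-regular minor'' into a complete argument that handles degenerate fundamental cycles, lifts the four-element analysis to all cycles via the homotopy reduction, and coordinates a single global normalization across all cycles rather than an independent choice on each—this last point is exactly where connectivity of $G_B$ is indispensable. An alternative and more economical route to the same conclusion is simply to quote the unique-representability criterion for regular matroids over partial fields from \cite{Semple1996, Baker-Oliver2020}, using only that $\H$ is a partial field; the argument sketched above is the content that underlies that criterion in the present case.
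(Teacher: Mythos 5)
The paper offers no proof of this statement at all: it is quoted directly from \cite{Semple1996,Baker-Oliver2020} as a special case of the general fact that regular matroids are uniquely representable over every partial field (indeed every pasture). So the ``more economical route'' you mention at the end --- simply citing the partial-field criterion --- is exactly what the paper does, and as a proof of record it is unobjectionable.

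Your detailed sketch, however, contains a genuine error at its central step. It is not true that ``every cross-ratio equals $1$,'' nor that the normalized $D_i$ can be made to have every support entry equal to $1$. The $U_{2,4}$ argument only controls $4$-cycles of $G_B$, i.e.\ $2\times 2$ submatrices of $D$ with all four entries nonzero; fundamental cycles of length $\geq 6$ carry scaling-invariant alternating cycle products that need not be trivial. Concretely, take $\cM=\cM(K_4)$ with basis a spanning star: then $D$ is a $3\times 3$ network matrix with support $\left[\begin{smallmatrix}1&1&0\\1&0&1\\0&1&1\end{smallmatrix}\right]$, the graph $G_B$ is a single $6$-cycle containing no full $2\times 2$ submatrix, and the alternating product of the six nonzero entries equals $-1$, which no row or column scaling can change. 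Indeed, if all support entries could be normalized to $1$, the all-ones-support matrix would represent $\cM(K_4)$; but its three non-basis columns have determinant $\pm 2\neq 0$, whereas the three non-tree edges of $K_4$ form a triangle and hence are dependent --- so that matrix does not even represent the right matroid (nor is it an $\H$-matrix). What the homotopy-theoretic machinery (Tutte's homotopy theorem, as used by Brylawski--Lucas and in \cite{Semple1996}) actually delivers is not triviality of cycle products but that they are invariants determined by the matroid (up to the conjugation automorphism of $\H$), hence equal for $D_1$ and $D_2$; that is the statement your tree-normalization argument needs, and it is precisely the hard, non-local part of the theorem that your sketch delegates to a black box while mis-stating what the black box outputs.
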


For $\sqrt[6]{1}$-matroids that are not regular, unique representability does not hold in general. However, one still has unique representability under the additional assumption that the matroid is $3$-connected: 

\begin{thm}[\cite{Semple1996}, Theorem 6.4]\label{key lemma}
If a $\sqrt[6]{1}$-matroid $\cM$ is $3$-connected, then  $\cM$ is uniquely representable over $\H$. 
\end{thm}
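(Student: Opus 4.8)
The plan is to prove the statement by induction on $n=|E|$, using the inductive structure theory of $3$-connected matroids together with the very restricted arithmetic of $\H$. First I would set up normal forms. Given a basis $B$ of $\cM$, every $\H$-representation is equivalent (via the row and column rescalings by sixth roots of unity, the row and column swaps, and the pivots of Definition~\ref{equivalent}) to one in reduced form $[\,I_r \mid D\,]$, where the rows are indexed by $B$ and the columns of $D$ by $E\setminus B$; the support of $D$ records the fundamental circuits, and its nonzero entries are constrained by the requirement that every subdeterminant lie in $\H$. The torus of row/column rescalings by $\mu_6$ lets me normalize the entries of $D$ along a spanning tree $T$ of the bipartite \emph{fundamental graph} $G_D$ (rows and columns of $D$ as vertices, nonzero entries as edges) to equal $1$. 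After this normalization the only remaining ambiguity should be the global automorphism of $\H$ given by complex conjugation; this is precisely why Definition~\ref{unique representability} permits $M_1$ to be equivalent to $\overline{M_2}$ rather than to $M_2$ itself. So the goal becomes: the normalized reduced matrix is determined by $\cM$ and $B$ up to conjugation.

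For the base of the induction I would invoke Tutte's wheels-and-whirls theorem: a $3$-connected matroid with sufficiently many elements is either a wheel $W_r$, a whirl $W^r$, or has an element $e$ such that $\cM\setminus e$ or $\cM/e$ is again $3$-connected. Wheels are graphic, hence regular, so they are uniquely representable over $\H$ by Theorem~\ref{regular}. For whirls one computes directly: over an arbitrary field a whirl admits a one-parameter family of representations, but imposing that every $2\times 2$ (and larger) subdeterminant lie in $\mu_6\cup\{0\}$ pins the free parameter to a value $x$ with $x,\,1-x\in\mu_6$, and the only such values are $x=\omega$ and $x=\bar\omega=\omega^{-1}$. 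These two solutions are interchanged by complex conjugation, so the whirl is uniquely representable up to conjugation. The finitely many small cases below the range of Tutte's theorem are checked by hand.

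For the inductive step, suppose $\cM$ is $3$-connected and is neither a wheel nor a whirl, and choose $e$ so that $\cN:=\cM\setminus e$ (or $\cM/e$) is $3$-connected; by the inductive hypothesis $\cN$ is uniquely representable over $\H$. It then suffices to prove an \emph{extension lemma}: a fixed normalized representation of $\cN$ extends to at most one normalized $\H$-representation of $\cM$, up to conjugation. Concretely, deleting or contracting $e$ recovers the normalized reduced matrix of $\cN$, and adding $e$ back amounts to determining a single new column (respectively row) $d_e$ with entries in $\H$; the condition that all subdeterminants of $\cM$ involving $e$ lie in $\H$ is a system of constraints on $d_e$. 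The heart of the argument is that $3$-connectivity forces this system to have a unique solution up to the global conjugation symmetry.

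I expect this last point to be the main obstacle, and the place where the special arithmetic of $\H$ is essential. The mechanism is that, because $\H$ has a single nontrivial fundamental element up to the symmetries $x\mapsto 1-x$, $x\mapsto x^{-1}$, and conjugation, fixing any one entry of $d_e$ participating in a circuit already normalized in $\cN$ determines the neighbouring entries through the $2\times2$ minor relations; $3$-connectivity guarantees that the fundamental graph of $\cM$ is connected richly enough that these local determinations propagate to all entries of $d_e$ consistently, the only freedom being whether one takes $\omega$ or $\bar\omega$ at the first free choice. Making this propagation-and-consistency argument precise is the crux; in the literature it is packaged through Whittle's theory of \emph{stabilizers}, which shows that a suitable fixed $3$-connected minor stabilizes the entire class, thereby reducing the infinitely many extension problems to the finite base computation above. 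Carrying out the induction then yields unique representability over $\H$ of every $3$-connected $\sqrt[6]{1}$-matroid.
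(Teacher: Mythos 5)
The paper does not prove Theorem~\ref{key lemma} at all; it is quoted from Semple--Whittle \cite{Semple1996}, so the only question is whether your sketch amounts to a proof of the cited result. It does not: the gap sits exactly at the step you yourself flag as the crux, and it is not cosmetic. The propagation mechanism you describe fails as stated, because a single $2\times 2$ minor condition does not determine a neighbouring entry of the new column $d_e$. For instance, if the entries of $D$ in rows $i,j$ of some column $c$ both equal $1$ and $d_e[i]=1$, the condition $d_e[i]D[j,c]-d_e[j]D[i,c]\in\H$ only forces $d_e[j]\in\{0,1,\omega,\overline{\omega}\}$: four admissible values, not one value up to a global binary choice (and even after the zero pattern of $d_e$ is fixed by the fundamental circuit of $e$, three values remain). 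Rigidity comes only from playing many minor conditions off against one another, and proving that $3$-connectivity of $\cM$ (not merely of $\cN$) makes this overdetermined system rigid is precisely the content of the theorem; it is the real work in Kahn's unique-representability theorem over $GF(4)$ and in the cited proof. Deferring it to Whittle's stabilizer theory does not close the gap either: to invoke that machinery you must \emph{verify} that $U_{2,4}$ (or some other fixed minor) is a stabilizer for the class of $\H$-representable matroids -- a computation of exactly the same nature as the missing propagation argument -- and you must separately handle $3$-connected $\H$-matroids with no $U_{2,4}$-minor (these are binary, hence regular, hence covered by Theorem~\ref{regular}, but this case is absent from your outline).

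There is also a logical flaw in how the conjugation freedom enters your induction. After applying the inductive hypothesis and suitable equivalence moves, you may arrange that both representations of $\cM$ restrict to the \emph{same} matrix $A$ representing $\cN$; what you then need is that the extending column is unique up to scaling by $\mu_6$, i.e.\ uniqueness on the nose. Your extension lemma, ``at most one extension up to conjugation,'' is too weak: conjugating only the new column is not an admissible move from Definition~\ref{equivalent}, and conjugating the whole matrix destroys the normalization, replacing $A$ by $\overline{A}\neq A$ in general. If two extensions of the same $A$ differed by conjugating $d_e$, they could be genuinely inequivalent representations of $\cM$ even allowing a global conjugation, and the theorem would be false. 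So the conjugation symmetry must be spent exactly once, at the level of identifying the two restrictions to $\cN$, and the extension step must then yield strict uniqueness. Your own whirl computation is the warning sign: $\omega$ and $\overline{\omega}$ give inequivalent representations reconciled only by a global conjugation, and ruling out the survival of such a binary choice through every extension step is, once again, the theorem itself rather than something $3$-connectivity hands you for free.
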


The notions of $3$-connectedness and of a $2$-sum of matroids are fundamental in matroid theory; for definitions and details, see \cite{Oxley2011b}. Here we recall the following two theorems.

\begin{thm}[\cite{Oxley2011b},Corollary 8.3.4]\label{thm:decom of matroids}
Every matroid that is not 3-connected can be constructed from $3$-connected proper minors of itself by a sequence of the operations of direct sum and 2-sum.
\end{thm}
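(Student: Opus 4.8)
The plan is to argue by strong induction on $n=|E|$, using the standard structure theory of matroid connectivity (Tutte's connectivity function and its behaviour under direct sums and $2$-sums). It is convenient to prove the slightly stronger statement that \emph{every} matroid $\cM$ can be built from $3$-connected minors of itself by a sequence of direct sums and $2$-sums, where the minors may moreover be taken to be \emph{proper} whenever $\cM$ is not itself $3$-connected. If $\cM$ is $3$-connected there is nothing to prove: $\cM$ is its own trivial one-leaf decomposition. So assume $\cM$ is not $3$-connected, and split into two cases according to whether $\cM$ is connected.

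First I would dispose of the disconnected case. If $\cM$ is not connected, then by definition $\cM=\cM_1\oplus\cM_2$ with each $\cM_i$ a nonempty restriction $\cM|_{E_i}$, hence a proper minor with $|E_i|<n$. Applying the inductive hypothesis to $\cM_1$ and $\cM_2$ and combining the two resulting decompositions by the single direct sum yields the desired decomposition of $\cM$; by transitivity of the minor relation, all the $3$-connected leaves are proper minors of $\cM$.

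The substantive case is when $\cM$ is connected but not $3$-connected. Then $\cM$ admits a $2$-separation $(A,B)$ with $|A|,|B|\geq 2$, and since $\cM$ is connected this separation is exact, i.e.\ $r(A)+r(B)=r(\cM)+1$. The key structural fact, which is the converse direction of the theory of $2$-sums, then produces matroids $\cM_1$ on $A\cup\{p\}$ and $\cM_2$ on $B\cup\{p\}$, where $p$ is a new basepoint, such that $\cM=\cM_1\oplus_2\cM_2$. Because $|A|,|B|\geq 2$ we have $|E(\cM_i)|<n$ for $i=1,2$, and each $\cM_i$ is isomorphic to a proper minor of $\cM$, obtained by reducing the opposite side to the single basepoint $p$ through deletions and contractions. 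Feeding $\cM_1$ and $\cM_2$ into the inductive hypothesis and joining their decompositions along the $2$-sum completes the induction.

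The main obstacle is exactly this last case: establishing that a $2$-separation of a connected matroid genuinely arises from a $2$-sum and, more delicately for the ``proper minor'' conclusion, that each part $\cM_i$ is a minor of $\cM$ rather than merely an auxiliary matroid sharing the basepoint. Verifying this amounts to analysing how circuits and independent sets of $\cM$ interact with the partition $(A,B)$ and confirming the exactness identity above, which is where Tutte's connectivity theory does the real work. Finally, since the element count drops strictly at every direct sum and every $2$-sum, the recursion terminates after finitely many steps, and its leaves are precisely $3$-connected proper minors of $\cM$, as required.
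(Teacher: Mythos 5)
Your proposal is correct, and since the paper gives no proof of this statement (it is quoted directly from \cite{Oxley2011b}, Corollary 8.3.4), the right comparison is with the derivation there, which your argument reproduces essentially verbatim: induction on $|E|$, splitting off a direct sum when $\cM$ is disconnected, and in the connected case invoking the theorem that an exact 2-separation of a connected matroid yields a decomposition $\cM=\cM_1\oplus_2\cM_2$ with each $\cM_i$ isomorphic to a proper minor of $\cM$ (Theorem 8.3.1 in \cite{Oxley2011b}). The one caveat is that the ``key structural fact'' you appeal to is precisely that theorem, which carries all the real content; treating it as a known input is exactly what the cited source does, so this is a faithful, complete derivation of the corollary rather than a self-contained proof of the underlying structure theory.
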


\begin{thm}[\cite{whittle1997, Semple1996}]\label{2-sum of partial fields}
If $\cM_1$ and $\cM_2$ are $\sqrt[6]{1}$-matroids, then $\cM_1\oplus \cM_2$ and $\cM_1\oplus_2 \cM_2$ are also $\sqrt[6]{1}$-matroids. 
\end{thm}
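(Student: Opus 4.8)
The plan is to exhibit explicit $\H$-representations of $\cM_1\oplus\cM_2$ and $\cM_1\oplus_2\cM_2$, built out of given $\H$-representations $M_1,M_2$ of $\cM_1,\cM_2$, and then to verify by direct computation that every subdeterminant of the constructed matrix again lies in $\H$. The guiding principle throughout is that $\H$ is multiplicatively closed and contains $0$ and $-1$: hence any subdeterminant that can be shown to equal, up to sign, a \emph{product} of a subdeterminant of $M_1$ with a subdeterminant of $M_2$ automatically lies in $\H$. So the whole problem reduces to proving the appropriate factorization of subdeterminants.

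The direct sum is the easy case. The block-diagonal matrix with diagonal blocks $M_1$ and $M_2$ (and zero off-diagonal blocks) represents $\cM_1\oplus\cM_2$. Any square submatrix inherits the block-diagonal shape $\left(\begin{smallmatrix}A&0\\0&B\end{smallmatrix}\right)$ with $A$ a submatrix of $M_1$ and $B$ a submatrix of $M_2$; its determinant is $0$ unless both $A$ and $B$ are square, in which case it equals $\det A\cdot\det B\in\H\cdot\H\subseteq\H$. So I would dispose of this case in a line or two.

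The $2$-sum is where the content lies. After normalizing $M_1,M_2$ by the operations of Definition~\ref{equivalent} (here one must know that these operations preserve the class of $\H$-representations), I would assume reduced forms $M_1=[I\mid D_1]$ in which the basepoint $p$ is a non-basis element, contributing a column $\mathbf a$, so that $D_1=[D_1^-\mid\mathbf a]$; and $M_2=[I\mid D_2]$ in which $p$ is a basis element, so that it contributes a row $\mathbf b^{\mathrm T}$ of $D_2$, say $D_2=\left(\begin{smallmatrix}\mathbf b^{\mathrm T}\\ D_2^-\end{smallmatrix}\right)$. The standard matrix form of the $2$-sum (see \cite{Oxley2011b}) then represents $\cM_1\oplus_2\cM_2$ by $N=[I\mid D]$ with
\[
D=\begin{pmatrix} D_1^- & \mathbf a\,\mathbf b^{\mathrm T}\\[1mm] 0 & D_2^-\end{pmatrix}.
\]
Because every subdeterminant of $[I\mid D]$ is, up to sign, a subdeterminant of $D$ (or $0$), it suffices to show every subdeterminant of $D$ lies in $\H$. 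I would then take a square submatrix $S$ of $D$ with top rows $R_1$, bottom rows $R_2$, left columns $C_L$ and right columns $C_R$ (its bottom-left block is $0$), set $t=|R_1|-|C_L|$, and analyze cases. If $t<0$ or $t\geq 2$, then $\det S=0$: a Laplace expansion along $C_L$ has no surviving term when $t<0$ (the $C_L$ columns vanish off $R_1$), while when $t\geq 2$ the rank-one block $\mathbf a\,\mathbf b^{\mathrm T}$ forces two proportional rows in each surviving term. If $t=0$, the matrix is block triangular and $\det S=\det\!\big((D_1^-)[R_1,C_L]\big)\cdot\det\!\big((D_2^-)[R_2,C_R]\big)$, a product of a subdeterminant of $D_1$ with one of $D_2$. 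The decisive case is $t=1$, where the Laplace expansion along $C_L$, after factoring the scalar $a_i$ out of the single leftover top row $i$, reassembles into $\pm\det\!\big(D_1[R_1,\,C_L\cup\{p\}]\big)\cdot\det\!\big(D_2[\{p\}\cup R_2,\,C_R]\big)$ — again a product of a subdeterminant of $D_1$ (now using the column $\mathbf a$) with one of $D_2$ (now using the row $\mathbf b^{\mathrm T}$). In every case $\det S\in\H$.

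I expect the $t=1$ computation to be the main obstacle: the point is to recognize that the rank-one coupling $\mathbf a\,\mathbf b^{\mathrm T}$ makes the cofactor expansion collapse exactly into the product of two honest subdeterminants that absorb the basepoint data. A secondary technical point, which I would address at the outset, is the claim that the normalizing operations of Definition~\ref{equivalent} — pivoting in particular — preserve the property that \emph{all} subdeterminants lie in $\H$; this is a consequence of the partial-field structure of $\H$ and can be cited from \cite{Semple1996}.
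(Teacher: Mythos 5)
Your proof is correct, and it follows essentially the same route as the source the paper relies on: the paper does not prove Theorem~\ref{2-sum of partial fields} itself but quotes it from \cite{whittle1997, Semple1996}, and Remark~\ref{2-sum of representation} notes that the proof there is exactly the constructive one you give --- build the block-diagonal and rank-one-coupled representations $[I \mid D]$ and check that every subdeterminant factors, up to sign, as a product of a subdeterminant of $M_1$ with one of $M_2$, hence lies in the multiplicatively closed set $\H$. Your case analysis on $t = |R_1| - |C_L|$ (including the key $t=1$ collapse of the Laplace expansion into $\pm\det(D_1[R_1, C_L\cup\{p\}])\cdot\det(D_2[\{p\}\cup R_2, C_R])$) and your appeal to the fact that pivoting preserves the $\H$-matrix property are precisely the ingredients of that standard argument.
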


\begin{rem}\label{2-sum of representation}
The proof of Theorem~\ref{2-sum of partial fields} is constructive: given $\H$-representations of $\cM_1$ and $\cM_2$, respectively, one can explicitly construct $\H$-representations of $\cM_1\oplus \cM_2$ and $\cM_1\oplus_2 \cM_2$; see \cite{Oxley2011b,Semple1996}. 
\end{rem}


\section{The Jacobian of a sixth-root-of-unity matroid}\label{main section}

In this section, we will define the Jacobian of a representation $M$ of a $\sqrt[6]{1}$-matroid $\cM$. We will see that when $\cM$ is uniquely representable over $\H$, its Jacobian is well-defined independent of the choice of a representation. We will also present an example showing that the Jacobian of a $\sqrt[6]{1}$-matroid $\cM$ is not well-defined in general. 

Let $M$ be an $\H$-matrix. We define the $\E$-modules   
\[\Lambda^{*}_{\E}(M)=\row_{\C}(M)\cap \E^{n}\]
and\[\Lambda_{\E}(M)=\row_{\C}(M)^\perp\cap \E^{n},\]
where $\row_{\C}(M)$ is the row space of $M$ and $\row_{\C}(M)^\perp$ is the orthogonal complement of $\row_{\C}(M)$ with respect to the Hermitian inner product. Evidently, we have \[\row_{\C}(M)^\perp=\{v\in \C^n: v\cdot M^{\text H}=0\}.\] 

\begin{df}\label{maindf}
Let $\cM$ be a $\sqrt[6]{1}$-matroid and let $M$ be an $\H$-representation of $\cM$. \begin{enumerate}
    \item We define the $\E$-module\[\Jac(M)=\frac{\E^{n}}{\Lambda_{\E}(M)\oplus \Lambda^{*}_{\E}(M)}.\]
    \item When $\Jac(M)$ is independent of the choice of the representation $M$, we call it the Jacobian of $\cM$, and denote it by $\Jac(\cM)$. 
\end{enumerate}
\end{df}
\begin{rem}
Our definition of the Jacobian mimics the one for regular matroids; see \cite[Section 2.1]{BBY} or Section~\ref{regular Jacobian} below. There is a subtle difference between the real and complex cases, however. For a real matrix $M$, we have $\row_{\R}(M)^\perp=\ker_\R(M)$, but for a complex matrix, we have $\row_{\C}(M)^\perp=\ker_\C(\overline{M})$. If we use $\ker_\C(M)$ instead of $\row_{\C}(M)^\perp$ in the definition, then the two lattices $\Lambda_{\E}(M)$ and $\Lambda^{*}_{\E}(M)$ will not be orthogonal in general.
\end{rem}
We will show that when $\cM$ is uniquely representable over $\H$, the Jacobian of $\cM$ is well-defined. We start with a lemma. 

\begin{lem}\label{row space}
Let $\row_\E(M)$ be the set of linear combinations of the rows of $M$ with coefficients in $\E$. Then 
    \[\Lambda^{*}_{\E}(M)=\row_\E(M).\]
\end{lem}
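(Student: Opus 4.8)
The plan is to prove the two inclusions separately, with the easy one being $\row_\E(M)\subseteq \Lambda^{*}_\E(M)$ and the substance lying in the reverse containment. For the easy inclusion, observe that any $\E$-linear combination of the rows of $M$ is \emph{a fortiori} a $\C$-linear combination, hence lies in $\row_\C(M)$; and since every entry of $M$ lies in $\H\subseteq\E$, such a combination has all coordinates in $\E$. Thus $\row_\E(M)\subseteq\row_\C(M)\cap\E^{n}=\Lambda^{*}_\E(M)$.

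For the reverse inclusion, the key observation is that $M$ contains an $r\times r$ submatrix that is invertible \emph{over $\E$} (not merely over $\C$). Concretely, I would choose a basis $B$ of $\cM$ and let $M_B$ be the corresponding $r\times r$ submatrix of $M$. Since $B$ is a basis, $\det M_B\neq 0$, and since $M$ is an $\H$-matrix we have $\det M_B\in\H\setminus\{0\}=\mu_6$, which is a unit in $\E$. The entries of the adjugate $\operatorname{adj}(M_B)$ are, up to sign, $(r-1)\times(r-1)$ subdeterminants of $M$, hence lie in $\H\subseteq\E$. Therefore $M_B^{-1}=(\det M_B)^{-1}\operatorname{adj}(M_B)$ has all entries in $\E$, that is, $M_B\in\mathrm{GL}_r(\E)$.

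With this in hand, the reverse inclusion is a short computation. Given $v\in\Lambda^{*}_\E(M)=\row_\C(M)\cap\E^{n}$, the linear independence of the rows of $M$ (which holds because the $r\times n$ matrix $M$ has rank $r$) yields a unique $c\in\C^{r}$ with $v=cM$. Restricting to the columns indexed by $B$ gives $v_B=cM_B$, whence $c=v_B M_B^{-1}$. Since $v\in\E^{n}$ we have $v_B\in\E^{r}$, and since $M_B^{-1}$ has entries in $\E$ we conclude $c\in\E^{r}$. Thus $v=cM$ is an $\E$-linear combination of the rows of $M$, i.e.\ $v\in\row_\E(M)$, completing the proof.

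The main obstacle is the middle step: passing from invertibility over $\C$ to invertibility over $\E$. This is exactly where the defining property of an $\H$-matrix is used in an essential way — it is not enough that $\det M_B$ be nonzero; one needs $\det M_B$ to be a unit of $\E$ \emph{and} the cofactors to be Eisenstein integers, both of which follow from the fact that \emph{all} subdeterminants of $M$ lie in $\H$. Once this integrality is secured, the rest is elementary linear algebra over the ring $\E$.
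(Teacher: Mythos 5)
Your proof is correct and follows essentially the same route as the paper: the hard inclusion is obtained by inverting a full-rank $r\times r$ submatrix over $\E$, using that its determinant is a sixth root of unity (a unit of $\E$) and that its cofactors are subdeterminants of $M$, hence lie in $\E$ --- a point you actually spell out more explicitly than the paper does. The only discrepancy is that you tacitly assume $M$ is an $r\times n$ matrix of full row rank, whereas the paper's proof allows $M$ to have dependent rows and first passes to a full-rank $r\times n$ row submatrix $M'$ (with the same row space) before running exactly your argument; this is a one-sentence patch to your write-up.
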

\begin{proof}
    Clearly, $\Lambda^{*}_{\E}(M)\supseteq\row_\E(M)$. 
    
    For the other direction, let $v\in\Lambda^{*}_{\E}(M)$ and write it as a row vector. Let $M'$ be an $r \times n$ full-rank submatrix of $M$. Then $z\cdot M'=v$ has a solution $z\in\C^r$. Let $M''$ be an $r \times r$ full-rank submatrix of $M'$. Then $z\cdot M''=v'$, where $v'$ is a submatrix of $v$. Because $v\in\E^n$ and $M$ is an $\H$-matrix, we have $z=v'\cdot (M'')^{-1}\in\E^r$. Thus $v=z\cdot M'\in\row_\E(M)$. 
\end{proof}

The following trivial lemma allows us to focus on the case that $M$ has full rank. 
\begin{lem}\label{rref}
If $M$ is an $\H$-matrix representing $\cM$, there exists an $r\times n$ $\H$-representation $M'$ (obtained by removing some rows from $M$) such that  $\Lambda_{\E}(M)=\Lambda_{\E}(M')$ and $\Lambda^{*}_{\E}(M)=\Lambda^{*}_{\E}(M')$. In particular, $\Jac(M)=\Jac(M')$. 
\end{lem}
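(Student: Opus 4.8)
The plan is to observe that both lattices appearing in the definition depend on $M$ only through its complex row space $\row_{\C}(M)$, and then to select $r$ linearly independent rows of $M$ spanning that same row space. First I would note that
\[
\Lambda^{*}_{\E}(M)=\row_{\C}(M)\cap \E^{n}
\qquad\text{and}\qquad
\Lambda_{\E}(M)=\row_{\C}(M)^{\perp}\cap \E^{n}
\]
are each manifestly determined by the subspace $\row_{\C}(M)\subseteq\C^{n}$ alone; neither formula refers to any particular spanning set of rows. Hence if $M'$ is any matrix with $\row_{\C}(M')=\row_{\C}(M)$, then automatically $\Lambda_{\E}(M')=\Lambda_{\E}(M)$ and $\Lambda^{*}_{\E}(M')=\Lambda^{*}_{\E}(M)$, and consequently $\Jac(M')=\Jac(M)$ by Definition~\ref{maindf}.

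Next I would produce such an $M'$ by row selection. Since $M$ represents the rank-$r$ matroid $\cM$, we have $\rank(M)=r$, so $M$ has $r$ linearly independent rows. Let $M'$ be the $r\times n$ submatrix of $M$ consisting of these rows; then $\row_{\C}(M')=\row_{\C}(M)$, so $M'$ also represents $\cM$. It remains to check that $M'$ is again an $\H$-matrix: every subdeterminant of $M'$ is a subdeterminant of $M$ (formed using a subset of the retained rows), hence lies in $\H$. Thus $M'$ is an $r\times n$ $\H$-representation of $\cM$ obtained by deleting rows, and by the first paragraph it induces the same two lattices and the same Jacobian as $M$.

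There is essentially no serious obstacle here, which is why the lemma is labelled trivial; the only point requiring a moment's care is the observation driving the first paragraph, namely that the defining formulas for $\Lambda_{\E}(M)$ and $\Lambda^{*}_{\E}(M)$ are invariants of the row space rather than of the chosen matrix. Everything else — the existence of $r$ independent rows and the stability of the $\H$-matrix condition under passing to a row-submatrix — is immediate.
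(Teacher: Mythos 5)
Your proof is correct, and it fills in exactly the argument the paper leaves implicit: the lemma is stated without proof (flagged as ``trivial''), and the intended reasoning is precisely yours --- both lattices depend only on $\row_{\C}(M)$, and selecting $r$ independent rows yields a full-rank row-submatrix that is still an $\H$-matrix with the same row space.
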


For regular matroids, the Jacobian group is isomorphic to the cokernel of $MM^{\text H}$; see \cite[Proposition 2.1.2]{BBY}. This property also holds for $\sqrt[6]{1}$-matroids, as we will show momentarily. We remark that for a graph $G$, the matrix $AA^{\text H}$ is called the \emph{Laplacian matrix}, where $A$ is the incidence matrix. The matrix $MM^{\text H}$ here plays a similar role as the graph Laplacian. Because for any row vector $v$, the three equations $vMM^{\text H}=0$, $vMM^{\text H}v^{\text H}=0$, and $vM=0$ are equivalent, we have $\rank(MM^{\text H})=\rank(M)$. In particular, when $M_{r\times n}$ has full row rank, $MM^{\text H}$ has full rank. 

\begin{prop}\label{Laplacian}
Let $M_{r\times n}$ be an $\H$-matrix representing $\cM$. Then we have the following isomorphism of $\E$-modules:
\begin{align*}
\frac{\E^{n}}{\Lambda_{\E}(M)\oplus \Lambda^{*}_{\E}(M)} & \to \frac{\E^{r}}{\row_\E(MM^{\text H})}\\
[v] & \mapsto [v\cdot M^{\text H}].
\end{align*}
\end{prop}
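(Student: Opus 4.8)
The plan is to realize both sides as quotients arising from a single surjective $\E$-linear map and then invoke the first isomorphism theorem. Define $\phi\colon \E^{n}\to\E^{r}$ by $\phi(v)=v\cdot M^{\text H}$. This is a well-defined $\E$-module homomorphism because the entries of $M^{\text H}$ are complex conjugates of entries of the $\H$-matrix $M$, and $\H=\mu_6\cup\{0\}$ is closed under conjugation, so $M^{\text H}$ has entries in $\H\subseteq\E$. With $\phi$ in hand, the whole statement reduces to two claims: that $\phi$ is surjective, and that $\phi^{-1}\bigl(\row_\E(MM^{\text H})\bigr)=\Lambda_{\E}(M)\oplus\Lambda^{*}_{\E}(M)$. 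Granting these, and noting that $\row_\E(MM^{\text H})=\{zMM^{\text H}:z\in\E^r\}\subseteq\phi(\E^n)$, the composite $\E^{n}\xrightarrow{\phi}\E^{r}\to\E^{r}/\row_\E(MM^{\text H})$ is a surjection with kernel exactly $\Lambda_{\E}(M)\oplus\Lambda^{*}_{\E}(M)$, and the induced isomorphism is precisely $[v]\mapsto[v\cdot M^{\text H}]$.

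First I would prove surjectivity of $\phi$, which is where the $\H$-matrix hypothesis is essential. Choose a basis $B$ of $\cM$ and let $M''$ be the $r\times r$ submatrix of $M$ on the columns indexed by $B$. Since $M$ has full row rank $r$ and $B$ is a basis, $\det M''$ is a nonzero subdeterminant, hence lies in $\mu_6$ and is a unit of $\E$; moreover every cofactor of $M''$ is again a subdeterminant of $M$ and so lies in $\H\subseteq\E$. Therefore $(M'')^{-1}=(\det M'')^{-1}\operatorname{adj}(M'')$ has entries in $\E$, and consequently so does $((M'')^{\text H})^{-1}$. Given any $w\in\E^{r}$, let $v\in\E^{n}$ be the vector vanishing outside $B$ whose $B$-entries are $w\,((M'')^{\text H})^{-1}\in\E^{r}$; then $\phi(v)=v\cdot M^{\text H}=w$, establishing surjectivity.

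Next I would identify the preimage. If $v\in\E^n$ satisfies $\phi(v)=v\cdot M^{\text H}=zMM^{\text H}$ for some $z\in\E^{r}$, then $(v-zM)\cdot M^{\text H}=0$, so $v-zM\in\row_{\C}(M)^{\perp}\cap\E^{n}=\Lambda_{\E}(M)$ (note $v-zM\in\E^n$ since $zM\in\E^n$), while $zM\in\row_\E(M)=\Lambda^{*}_{\E}(M)$ by Lemma~\ref{row space}; hence $v\in\Lambda_{\E}(M)+\Lambda^{*}_{\E}(M)$. Conversely $\Lambda^{*}_{\E}(M)=\row_\E(M)$ maps into $\row_\E(MM^{\text H})$ and $\Lambda_{\E}(M)$ maps to $0$, so both summands lie in the preimage. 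Because $\row_{\C}(M)\cap\row_{\C}(M)^{\perp}=\{0\}$, the sum $\Lambda_{\E}(M)+\Lambda^{*}_{\E}(M)$ is direct, and thus equals the full preimage $\phi^{-1}(\row_\E(MM^{\text H}))$. Combining this kernel computation with surjectivity gives the isomorphism. The only genuinely non-formal step is the surjectivity of $\phi$; everything else is bookkeeping around the first isomorphism theorem. I expect the integrality of $(M'')^{-1}$ over $\E$ to be the crux, as it is precisely the point at which one uses that a nonzero maximal subdeterminant is a unit and that all cofactors lie in $\H$.
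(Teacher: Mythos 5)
Your proof is correct and follows essentially the same route as the paper: define $f(v)=v\cdot M^{\text H}$, show it is surjective, identify $f^{-1}\bigl(\row_\E(MM^{\text H})\bigr)$ with $\Lambda_{\E}(M)\oplus\Lambda^{*}_{\E}(M)$ via the decomposition $v=(v-uM)+uM$ and Lemma~\ref{row space}, and apply the first isomorphism theorem. The only difference is that you spell out the surjectivity of $f$ (via the adjugate of an invertible $r\times r$ submatrix $M[B]$, whose determinant is a unit of $\E$), a step the paper asserts in one line but which is justified by exactly the argument you give, mirroring the paper's own proof of Lemma~\ref{row space}.
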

\begin{proof}
Because $M$ is a full-rank $\H$-matrix, the $\E$-module homomorphism 
\begin{align*}
f:\E^{n}& \to \E^{r}\\
v & \mapsto v\cdot M^{\text H}
\end{align*}
is surjective. 

It remains to show $f^{-1}(\row_\E(MM^{\text H}))=\Lambda_{\E}(M)\oplus \Lambda^{*}_{\E}(M)$. For any $v\in f^{-1}(\row_\E(MM^{\text H}))$, $f(v)=vM^{\text H}\in\row_\E(MM^{\text H})$. Hence we may write $vM^{\text H}=uMM^{\text H}$ for some $u\in\E^r$. Then $(v-uM)M^{\text H}=0$, which implies $v-uM\in\Lambda_{\E}(M)$. Therefore $v\in\Lambda_{\E}(M)\oplus \Lambda^{*}_{\E}(M)$. The other direction follows immediately from Lemma~\ref{row space}. 

\end{proof}

For a graph, we may use the Smith normal form (SNF) of the Laplacian matrix to compute its Jacobian group; see \cite{Stanley}. In our case, the matrices $M$ and $MM^{\text H}$ are defined over the ring $\E$. 
Because the usual complex norm makes $\E$ a Euclidean domain, and hence a principal ideal domain (PID), the theory of the Smith Normal Form applies equally well in our case. We recall some classical facts about the Smith Normal Form over a PID:

\begin{thm}\label{SNF}
Let $A$ be a nonzero matrix over a PID $R$. 
\begin{enumerate}
    \item There exist invertible matrices $S$ and $T$ over $R$ such that the product $SAT$ is a diagonal matrix whose main diagonal is $(\alpha_1, \cdots, \alpha_r, 0, \cdots, 0)$, where $r$ is the rank of $A$ and $\alpha_i$ divides $\alpha_{i+1}$ in $R$ for $1\leq i<r$. The elements $\alpha_i$ are unique up to multiplication by a unit and are called the \emph{elementary divisors} of $A$. The diagonal matrix $SAT$ is called the \emph{Smith normal form} of the matrix $A$. We denote the process of going from $A$ to its SNF by 
\begin{center}
\begin{tikzcd}
A\arrow{r}[above]{\text{SNF}} & (\alpha_1, \cdots, \alpha_r, 0, \cdots, 0).
\end{tikzcd}     
\end{center}    
    \item Let $d_i(A)$ be the greatest common divisor of all the $i\times i$ subdeterminants of $A$ for $1\leq i\leq r$, and let $d_0(A)=1$. Then \[\alpha_i=\frac{d_i(A)}{d_{i-1}(A)}.\]
\end{enumerate}
\end{thm}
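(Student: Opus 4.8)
The plan is to establish the existence statement in part (1) by a direct reduction algorithm using invertible row and column operations, and then to derive both the divisor formula in part (2) and the uniqueness assertion in part (1) from the invariance of the greatest common divisors $d_i$ under left and right multiplication by invertible matrices.

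For existence, I would argue by induction on the size of $A$. The engine is the following Bezout operation: given two entries $a,b$ lying in the same row (or column), with $d=\gcd(a,b)=xa+yb$, right (resp.\ left) multiplication by the invertible $2\times 2$ block $\left(\begin{smallmatrix} x & -b/d \\ y & a/d \end{smallmatrix}\right)$, which has determinant $1$, replaces the pair $(a,b)$ by $(d,0)$. Using this, I would first bring a nonzero entry to the $(1,1)$ position whose generated ideal is maximal among all $(1,1)$-entries of matrices equivalent to $A$; such an entry exists because $R$, being a PID, is Noetherian. (Over the Euclidean domain $\E$ one may instead simply minimize the complex norm of the $(1,1)$-entry, which is the concrete version we care about.) The Bezout operation then forces this entry $b_{11}$ to divide every entry in its row and column --- otherwise a strictly larger ideal would appear in the $(1,1)$ slot, contradicting maximality --- so I can clear the first row and column to reach a block matrix $\diag(b_{11}, A')$. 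The one genuinely nonautomatic point is the divisibility chain: I must check that $b_{11}$ divides every entry of $A'$, which follows by adding a row of $A'$ into the first row and invoking maximality once more. Induction applied to $A'$ then yields the diagonal form with $b_{11}=\alpha_1$ dividing $\alpha_2\mid\cdots\mid\alpha_r$.

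For part (2), I would show that $d_i(A)$ is unchanged, up to a unit, when $A$ is multiplied on either side by an invertible matrix. By the Cauchy--Binet formula every $i\times i$ minor of $SA$ is an $R$-linear combination of $i\times i$ minors of $A$, so $d_i(A)\mid d_i(SA)$; applying the same to $A=S^{-1}(SA)$ gives the reverse divisibility, hence $d_i(SA)=d_i(A)$ up to a unit, and similarly for right multiplication. It then suffices to compute $d_i$ for the Smith normal form $D=\diag(\alpha_1,\dots,\alpha_r,0,\dots,0)$. Its only nonzero $i\times i$ minors are products $\alpha_{j_1}\cdots\alpha_{j_i}$ of $i$ distinct diagonal entries, and the divisibility $\alpha_1\mid\cdots\mid\alpha_r$ makes $\alpha_1\cdots\alpha_i$ the gcd of all of these; thus $d_i(A)=d_i(D)=\alpha_1\cdots\alpha_i$ up to a unit, which gives $\alpha_i=d_i(A)/d_{i-1}(A)$. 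Since the $d_i$ depend only on $A$, this formula simultaneously proves that the $\alpha_i$ are determined up to units, completing the uniqueness claim in part (1).

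The main obstacle is the existence argument, and within it the verification of the divisibility chain $\alpha_i\mid\alpha_{i+1}$: clearing the first row and column is routine, but ensuring that the pivot divides the entire remaining block (rather than merely its own row and column) is what makes the normal form canonical and is the step that genuinely uses the PID/Noetherian hypothesis. The invariance argument in part (2) is comparatively mechanical once Cauchy--Binet is invoked.
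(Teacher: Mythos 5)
Your proof is correct, but there is nothing in the paper to compare it against: the paper states Theorem~\ref{SNF} as a recalled classical fact about the Smith normal form over a PID and gives no proof of it, proceeding directly to its consequences (Corollary~\ref{FGM} and Corollary~\ref{Jac-Smith}). Your argument is the standard textbook one, and it is sound. For existence, the Bezout block $\left(\begin{smallmatrix} x & -b/d \\ y & a/d \end{smallmatrix}\right)$ indeed has determinant $1$, the Noetherian maximality of the ideal $(b_{11})$ correctly forces $b_{11}$ to divide its row and column, and you identify and correctly handle the one genuinely nonroutine step: that the pivot divides the entire complementary block $A'$ (add a row of $A'$ to the first row and invoke maximality again), which is exactly what produces the chain $\alpha_1 \mid \alpha_2 \mid \cdots \mid \alpha_r$. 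For part (2), the Cauchy--Binet argument shows $d_i(SA)=d_i(A)$ up to units, the computation of $d_i$ for the diagonal form uses the divisibility chain correctly, and deriving uniqueness from the intrinsic invariants $d_i$ is the standard and cleanest route. Two points you leave implicit deserve a sentence in a written version: first, that the number of nonzero diagonal entries equals $\rank(A)$ (immediate, since $S$ and $T$ remain invertible over the fraction field of $R$ and so preserve rank); second, that the column version of the Bezout operation uses the appropriately transposed block on the left rather than the same matrix. Neither affects the validity of the argument. Note also that the paper only ever applies this theorem over $R=\E$, which is Euclidean, so your parenthetical remark that one could there minimize the norm of the pivot instead of invoking Noetherianity is a legitimate simplification for the paper's purposes.
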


We adopt the notation $\cong_R$ for $R$-module isomorphisms. 

\begin{cor}[\cite{Stanley}]\label{FGM}
Let $A_{n\times n}$ be a matrix over a PID $R$ and let\begin{tikzcd}
A\arrow{r}[above]{\text{SNF}} & (a_1, \cdots, a_n).
\end{tikzcd}  
Then we have\[\frac{R^{n}}{\row_R(A)}\cong_R \bigoplus_{i=1}^nR/(a_i).\]
\end{cor}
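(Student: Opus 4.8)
The plan is to reduce to the diagonal case via the Smith normal form supplied by Theorem~\ref{SNF}, and then to read off the quotient directly. By that theorem there exist invertible matrices $S$ and $T$ over $R$ with $SAT = D$, where $D = \diag(a_1, \ldots, a_n)$ is the diagonal matrix of elementary divisors (padded by zeros). The strategy is to show that neither $S$ nor $T$ changes the isomorphism type of the quotient $R^n/\row_R(A)$, and then to compute the quotient for the diagonal matrix $D$.

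First I would analyze the effect of the two multiplications separately, keeping in mind that $\row_R(\cdot)$ consists of row vectors and is the image of the left-multiplication map $v \mapsto vA$. For the right factor $T$, the map $\phi\colon R^n \to R^n$, $w \mapsto wT$, is an $R$-module automorphism (since $T$ is invertible), and it carries $\row_R(A)$ bijectively onto $\row_R(AT)$ because $vA \mapsto vAT$ as $v$ ranges over $R^n$. Hence $\phi$ descends to an isomorphism $R^n/\row_R(A) \cong_R R^n/\row_R(AT)$. For the left factor $S$, the rows of $SAT$ are $R$-linear combinations of the rows of $AT$, and since $S$ is invertible these combinations generate the same submodule; thus $\row_R(SAT) = \row_R(AT)$, with no change to the quotient at all. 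Combining the two observations gives $R^n/\row_R(A) \cong_R R^n/\row_R(D)$.

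Finally, I would compute the diagonal case directly. The $i$-th row of $D$ is $a_i e_i$, so $\row_R(D) = \bigoplus_{i=1}^n (a_i) \subseteq R^n$, and therefore
\[
R^n/\row_R(D) = R^n\Big/\bigoplus_{i=1}^n (a_i) \;\cong_R\; \bigoplus_{i=1}^n R/(a_i).
\]
Chaining this with the previous isomorphism yields the claim.

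The one point that requires care—and is really the crux—is the asymmetry between the two multiplications: left multiplication by an invertible $S$ leaves the row space literally unchanged (it only replaces one generating set by another), whereas right multiplication by an invertible $T$ genuinely alters the row space but induces an isomorphism on the quotient. Once this asymmetry is disentangled, the remainder is a direct computation, so I do not anticipate any serious obstacle.
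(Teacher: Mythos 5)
Your proof is correct. The paper gives no argument for this corollary at all---it is cited to Stanley as a classical fact---and your proof is precisely the standard one underlying that citation: reduce to the diagonal case via the Smith normal form, noting that left multiplication by an invertible $S$ leaves $\row_R(A)$ literally unchanged while right multiplication by an invertible $T$ induces an automorphism of $R^n$ carrying $\row_R(A)$ onto $\row_R(AT)$, and then read off $R^n/\row_R(D)\cong_R \bigoplus_{i=1}^n R/(a_i)$ for the diagonal matrix $D$; the asymmetry you flag as the crux is indeed the only point requiring care, and you handle it correctly.
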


Combining Proposition~\ref{Laplacian} and Corollary~\ref{FGM}, we get the following result. 

\begin{cor}\label{Jac-Smith}
Let $M_{r\times n}$ be an $\H$-representation. Let\begin{tikzcd}
MM^{\text H}\arrow{r}[above]{\text{SNF}} & (\alpha_1, \cdots, \alpha_r).
\end{tikzcd} Then we have\[\Jac(M)\cong_\E \bigoplus_{i=1}^r\E/(\alpha_i).\]
\end{cor}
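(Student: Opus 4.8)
The plan is to derive the statement by simply composing the two isomorphisms already established, namely the one in Proposition~\ref{Laplacian} and the one in Corollary~\ref{FGM}; no new idea is required, only a verification that the hypotheses of those results are met by $MM^{\text H}$. Before composing, I would record two preliminary facts. First, $MM^{\text H}$ is genuinely a matrix over $\E$: since $M$ is an $\H$-matrix, each of its entries lies in $\H=\mu_6\cup\{0\}\subseteq\E$, and because $\H$ is closed under complex conjugation, the entries of $M^{\text H}$ also lie in $\H\subseteq\E$; hence every entry of $MM^{\text H}$, being a sum of products of elements of $\E$, lies in $\E$. Second, as noted in the paragraph preceding Proposition~\ref{Laplacian}, $\rank(MM^{\text H})=\rank(M)=r$, so $MM^{\text H}$ is an $r\times r$ matrix over $\E$ of full rank $r$; consequently its Smith normal form $(\alpha_1,\dots,\alpha_r)$ has all $r$ elementary divisors nonzero, matching the number of summands in the claimed decomposition.

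Next I would invoke Proposition~\ref{Laplacian}, which furnishes the $\E$-module isomorphism
\[
\Jac(M)=\frac{\E^{n}}{\Lambda_{\E}(M)\oplus \Lambda^{*}_{\E}(M)}\;\cong_\E\;\frac{\E^{r}}{\row_\E(MM^{\text H})}.
\]
I would then apply Corollary~\ref{FGM} with $R=\E$, ambient dimension $r$, and $A=MM^{\text H}$: the ring $\E$ is a PID (being Euclidean under the complex norm, as recalled before Theorem~\ref{SNF}), and $MM^{\text H}$ has Smith normal form $(\alpha_1,\dots,\alpha_r)$, so the corollary yields
\[
\frac{\E^{r}}{\row_\E(MM^{\text H})}\;\cong_\E\;\bigoplus_{i=1}^r\E/(\alpha_i).
\]
Composing the two displayed isomorphisms gives $\Jac(M)\cong_\E\bigoplus_{i=1}^r\E/(\alpha_i)$, which is exactly the assertion.

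I do not expect a serious obstacle, since the corollary is a formal consequence of results whose proofs already appear above; the genuinely substantive input is Proposition~\ref{Laplacian}, which identifies $\Jac(M)$ with the cokernel of the ``Laplacian'' $MM^{\text H}$. The statement already hypothesizes that $M$ is an $r\times n$ $\H$-representation, i.e.\ of full row rank $r$, so the rank bookkeeping goes through cleanly; were one to allow a representation with redundant rows, one would first reduce to the full-rank case using Lemma~\ref{rref}, which leaves $\Jac(M)$ unchanged. The only point meriting care is precisely this matching of the number of nonzero elementary divisors to $\rank(MM^{\text H})=r$, which is guaranteed by the full-rank observation above.
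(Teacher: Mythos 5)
Your proposal is correct and follows exactly the paper's own route: the paper proves this corollary with the single sentence ``Combining Proposition~\ref{Laplacian} and Corollary~\ref{FGM}, we get the following result.'' Your extra bookkeeping --- checking that $MM^{\text H}$ has entries in $\E$ and that $\rank(MM^{\text H})=r$ matches the number of elementary divisors --- is a sound (and slightly more careful) elaboration of the same argument, not a different approach.
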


The next two results will allow us to prove that if a $\sqrt[6]{1}$-matroid $\cM$ is uniquely representable, then $\Jac(\cM)$ is well-defined independent of the choice of a representation.

\begin{lem}\label{equivlent Laplacian}
If $M_1$ and $M_2$ are equivalent $\H$-representations, then $M_1M_1^{\text H}$ and $M_2M_2^{\text H}$ have the same SNF. 
\end{lem}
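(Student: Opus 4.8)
The plan is to show that each of the four generating operations in Definition~\ref{equivalent} transforms $MM^{\text H}$ in a way that preserves its Smith normal form, and then to invoke Theorem~\ref{SNF} (the SNF is an invariant of a matrix up to multiplication on the left and right by invertible matrices over $\E$). Since equivalence is defined as a finite sequence of such operations, it suffices to treat each operation individually and check that the resulting Laplacian $M_2 M_2^{\text H}$ is obtained from $M_1 M_1^{\text H}$ by multiplying on the left by some $\E$-invertible matrix $U$ and on the right by $U^{\text H}$ (which is then automatically $\E$-invertible as well). Multiplication by such a congruence $M M^{\text H} \mapsto U (MM^{\text H}) U^{\text H}$ visibly preserves the SNF because $U$ and $U^{\text H}$ are both invertible over the PID $\E$.

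First I would record the effect of each operation on $M$ and then on $MM^{\text H}$. Interchanging two rows replaces $M$ by $P M$ for a permutation matrix $P$, so $MM^{\text H} \mapsto P(MM^{\text H})P^{\text H}$, a congruence by the $\E$-invertible matrix $P$. Multiplying a row by a sixth root of unity $\zeta$ replaces $M$ by $DM$ where $D = \diag(1,\dots,\zeta,\dots,1)$; since $\zeta \in \mu_6 \subseteq \E^\times$, the matrix $D$ is invertible over $\E$, and again $MM^{\text H} \mapsto D(MM^{\text H})D^{\text H}$. The two column operations are the crucial point: multiplying a column by $\zeta$ or interchanging two columns replaces $M$ by $MQ$ for a matrix $Q$ (diagonal with a sixth root of unity, or a permutation matrix) that is \emph{unitary}, i.e.\ $QQ^{\text H} = \Id$. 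Consequently $MM^{\text H} \mapsto (MQ)(MQ)^{\text H} = M Q Q^{\text H} M^{\text H} = MM^{\text H}$ is literally unchanged, so there is nothing to check for these operations.

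The main obstacle is the pivoting operation, which does not act on $M$ by a single left- or right-multiplication but rather produces a new matrix representing the same row space. The clean way to handle this is to observe (using Lemma~\ref{rref} to reduce to the full-rank case) that pivoting on a nonzero entry yields a matrix $M_2$ whose rows are an $\E$-basis for the same lattice as the rows of $M_1$; more precisely, $M_2 = U M_1$ for some $U \in \mathrm{GL}_r(\E)$. This is where the $\H$-matrix hypothesis does real work: one must verify that the change-of-basis matrix $U$ genuinely has entries in $\E$ and is invertible over $\E$, which follows from the same argument as in Lemma~\ref{row space} (inverses of full-rank $\H$-submatrices have Eisenstein entries). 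Granting $M_2 = UM_1$, we again get $M_2 M_2^{\text H} = U(M_1 M_1^{\text H})U^{\text H}$, a congruence by an $\E$-invertible matrix.

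Having reduced every operation to a congruence $MM^{\text H} \mapsto U(MM^{\text H})U^{\text H}$ with $U \in \mathrm{GL}(\E)$, I would conclude by noting that left multiplication by $U$ and right multiplication by $U^{\text H}$ are exactly the moves allowed in Theorem~\ref{SNF}(1), so $M_1 M_1^{\text H}$ and $M_2 M_2^{\text H}$ have identical elementary divisors and hence the same Smith normal form. The only genuine content lies in the pivoting step; the three remaining operations are either trivial (the column operations, which fix $MM^{\text H}$ on the nose) or immediate (the row operations).
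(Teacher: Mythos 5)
Your proof is correct and follows essentially the same route as the paper's: column operations (being unitary over $\E$) leave $MM^{\text H}$ unchanged, the remaining operations amount to left multiplication by a matrix in $\mathrm{GL}(\E)$, and the resulting congruence $MM^{\text H}\mapsto U(MM^{\text H})U^{\text H}$ preserves the SNF by Theorem~\ref{SNF}. The only difference is that you spell out the pivoting step (via the lattice argument of Lemma~\ref{row space}), which the paper simply asserts by writing $M_1=SM_2$ with $S$ invertible over $\E$.
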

\begin{proof}
It is easy to see that the column operations on a matrix $M$ in Definition~\ref{equivalent} does not change $MM^{\text H}$. Thus we may assume $M_1=SM_2$, where $S$ is an invertible matrix over $\E$. Hence $M_1M_1^{\text H}=SM_2M_2^{\text H}S^{\text H}$. Because $S^{\text H}$ is also invertible over $\E$, $M_1M_1^{\text H}$ and $M_2M_2^{\text H}$ have the same SNF by Theorem~\ref{SNF}.  
\end{proof}

\begin{lem}\label{conjugate Laplacian}
Let $M$ be a matrix over $\E$. \begin{enumerate}
    \item The two matrices $MM^{\text H}$ and $\overline{M}(\overline{M})^{\text H}$ have the same SNF.
    \item Let\begin{tikzcd}
MM^{\text H}\arrow{r}[above]{\text{SNF}} & (a_1, \cdots, a_n).
\end{tikzcd}Then $a_i^6\in\Z$ for any $i$.
\end{enumerate} 
\end{lem}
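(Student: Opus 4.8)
The plan is to prove part (1) first and then deduce part (2) from it. For part (1), the key observation is that $MM^{\text H}$ is Hermitian, so its entrywise conjugate equals its transpose; concretely, since $(\overline{M})^{\text H}=M^{\text T}$, one has the identity $\overline{M}(\overline{M})^{\text H}=\overline{M}\,M^{\text T}=(M\overline{M}^{\text T})^{\text T}=(MM^{\text H})^{\text T}$. Thus part (1) reduces to the statement that a matrix over $\E$ and its transpose have the same SNF. This in turn follows immediately from Theorem~\ref{SNF}: if $S(MM^{\text H})T$ is the Smith normal form with $S,T$ invertible over $\E$, then transposing gives $T^{\text T}(MM^{\text H})^{\text T}S^{\text T}$ equal to the same diagonal matrix (a diagonal matrix equals its own transpose), and $T^{\text T},S^{\text T}$ are again invertible over $\E$.

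For part (2), I would first record two elementary facts about $\E$: complex conjugation is a ring automorphism of $\E$ fixing $\Z$ (since $\overline{\omega}=\omega^{-1}\in\E$), and the units of $\E$ are exactly the sixth roots of unity $\mu_6$. Applying conjugation entrywise to the Smith normal form equation $S(MM^{\text H})T=\diag(a_1,\dots,a_n)$ yields $\overline{S}\,\overline{MM^{\text H}}\,\overline{T}=\diag(\overline{a_1},\dots,\overline{a_n})$; since $\overline{S},\overline{T}$ remain invertible over $\E$ and the divisibility relations $a_i\mid a_{i+1}$ are preserved by the automorphism, this exhibits $\overline{a_1},\dots,\overline{a_n}$ as the elementary divisors of $\overline{MM^{\text H}}$. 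On the other hand, by part (1) the matrix $\overline{MM^{\text H}}=(MM^{\text H})^{\text T}$ has the same elementary divisors $a_1,\dots,a_n$ as $MM^{\text H}$. By the uniqueness clause of Theorem~\ref{SNF}, I conclude that $\overline{a_i}$ and $a_i$ are associates: $\overline{a_i}=u_i a_i$ for some unit $u_i\in\mu_6$.

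It then remains a short computation. Multiplying $\overline{a_i}=u_i a_i$ by $a_i$ gives $a_i\overline{a_i}=u_i a_i^2$, and since $a_i\overline{a_i}=|a_i|^2$ is a nonnegative rational integer, I obtain $a_i^2=(a_i\overline{a_i})\,u_i^{-1}$. Cubing yields $a_i^6=(a_i\overline{a_i})^3\,u_i^{-3}$, and because $u_i$ is a sixth root of unity we have $u_i^{-3}=\pm 1\in\Z$; hence $a_i^6=\pm(a_i\overline{a_i})^3\in\Z$, as claimed. (When $a_i=0$ the assertion is trivial.)

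I expect the only real subtlety to be the bookkeeping in part (2): one must be careful that conjugation sends one valid Smith normal form to another (using that it is a ring automorphism preserving both invertibility and divisibility) in order to legitimately invoke uniqueness of elementary divisors up to units. Once the associate relation $\overline{a_i}=u_i a_i$ is established, the conclusion $a_i^6\in\Z$ is forced by the fact that cubing a sixth root of unity lands in $\{\pm 1\}$.
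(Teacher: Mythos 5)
Your proof is correct and follows essentially the same route as the paper's: identify $\overline{M}(\overline{M})^{\text H}$ with $(MM^{\text H})^{\text T}$ and use transpose-invariance of the SNF for part (1), then conjugate the Smith decomposition, invoke uniqueness of elementary divisors to get $\overline{a_i}=u_i a_i$ with $u_i\in\mu_6$, and finish with the computation $a_i^6=\pm|a_i|^6\in\Z$. The only differences are cosmetic: you justify transpose-invariance by transposing the decomposition $S(MM^{\text H})T=\diag(a_1,\dots,a_n)$ rather than citing the subdeterminant characterization, and you spell out the bookkeeping (conjugation being a ring automorphism of $\E$ preserving invertibility and divisibility) that the paper leaves implicit.
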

\begin{proof}
Note that $\overline{M}(\overline{M})^{\text H}$ is equal to the transpose of $MM^{\text H}$. By Theorem~\ref{SNF}, a matrix and its transpose have the same SNF, and hence the first assertion holds.  

Clearly\begin{tikzcd}
\overline{M}(\overline{M})^{\text H}\arrow{r}[above]{\text{SNF}} & (\overline{a_1}, \cdots, \overline{a_n}).
\end{tikzcd}By the uniqueness of the elementary divisors, for any $i$ we have $a_i=u\overline{a_i}$, where $u^6=1$. Thus $a_i^2=u\overline{a_i}a_i=u|a_i|^2$, which implies  $a_i^6\in\Z$.
\end{proof}
\begin{rem}
In general, $M$ and $\overline{M}$ do not have the same SNF. For example, the $1\times 1$ matrices $[2+\omega]$ and $[2+\overline{\omega}]$ have different SNFs. As $\E$-modules, $\E/(2+\omega)\ncong_\E\E/(2+\overline{\omega})$, although $\E/(2+\omega)\cong_\Z\E/(2+\overline{\omega})$. 
\end{rem}

By Lemma~\ref{rref}, Corollary~\ref{Jac-Smith}, Lemma~\ref{equivlent Laplacian}, and Lemma~\ref{conjugate Laplacian}, we have the following theorem.

\begin{thm}\label{Main}
Let $\cM$ be a $\sqrt[6]{1}$-matroid that is uniquely representable over $\H$. \begin{enumerate}
    \item The Jacobian $\Jac(\cM)$ of $\cM$ is well-defined.  
    \item  Let $M_{r\times n}$ be an $\H$-representation of $\cM$, where $r$ is the rank of $\cM$ and $n$ is the number of elements. If we let
    \begin{tikzcd}MM^{\text H}\arrow{r}[above]{\text{SNF}} & (\alpha_1, \cdots, \alpha_r),\end{tikzcd}then\[\Jac(\cM)\cong_\E \bigoplus_{i=1}^r\E/(\alpha_i)\]as $\E$-modules. Moreover, for any $i$, we have $\alpha_i^6\in\Z\backslash\{0\}$.    
\end{enumerate}
\end{thm}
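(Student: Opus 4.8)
The plan is to assemble the statement from the four preceding results, reducing everything to the full-rank case at the outset. By Lemma~\ref{rref}, for any $\H$-representation $M$ of $\cM$ there is a full-rank $r\times n$ representation $M'$ with $\Jac(M)=\Jac(M')$, so I would assume throughout that representations are $r\times n$ of full row rank. Under this assumption $MM^{\text H}$ is an $r\times r$ matrix of full rank (as observed just before Proposition~\ref{Laplacian}), and Corollary~\ref{Jac-Smith} already gives the desired description $\Jac(M)\cong_\E\bigoplus_{i=1}^r\E/(\alpha_i)$ in terms of the elementary divisors of $MM^{\text H}$. The entire content of part~(1) is therefore to check that these elementary divisors do not depend on the chosen representation.

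For well-definedness, I would take two full-rank $\H$-representations $M_1$ and $M_2$ of $\cM$ and invoke unique representability (Definition~\ref{unique representability}): $M_1$ is equivalent to either $M_2$ or $\overline{M_2}$. In the first case, Lemma~\ref{equivlent Laplacian} says $M_1M_1^{\text H}$ and $M_2M_2^{\text H}$ share the same SNF, so Corollary~\ref{Jac-Smith} yields $\Jac(M_1)\cong_\E\Jac(M_2)$. In the second case, Lemma~\ref{equivlent Laplacian} gives that $M_1M_1^{\text H}$ and $\overline{M_2}(\overline{M_2})^{\text H}$ have the same SNF, and then Lemma~\ref{conjugate Laplacian}(1) identifies the SNF of $\overline{M_2}(\overline{M_2})^{\text H}$ with that of $M_2M_2^{\text H}$; chaining these, $\Jac(M_1)\cong_\E\Jac(M_2)$ again.

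The step I expect to be the real crux is this conjugate case. The subtlety is that, as the remark after Lemma~\ref{conjugate Laplacian} stresses, $M$ and $\overline{M}$ need not have the same SNF, so passing to the complex conjugate representation could a priori change the Jacobian. What rescues the argument is precisely that the \emph{Laplacian-type} matrices $MM^{\text H}$ and $\overline{M}(\overline{M})^{\text H}$ are transposes of one another and hence do share an SNF; this is the one genuinely non-formal input (as opposed to bookkeeping) and is exactly what Lemma~\ref{conjugate Laplacian}(1) supplies.

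Finally, for the ``moreover'' clause in part~(2), I would quote Lemma~\ref{conjugate Laplacian}(2) for $\alpha_i^6\in\Z$ and then argue nonvanishing separately: since the full-rank matrix $MM^{\text H}$ has rank $r$, its Smith normal form has exactly $r$ nonzero elementary divisors by Theorem~\ref{SNF}(1), so $\alpha_i\neq 0$ and therefore $\alpha_i^6\in\Z\setminus\{0\}$ for every $i$. This last point is essentially immediate and I anticipate no obstacle there.
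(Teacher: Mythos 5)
Your proposal is correct and follows exactly the paper's own argument: the paper proves Theorem~\ref{Main} precisely by combining Lemma~\ref{rref}, Corollary~\ref{Jac-Smith}, Lemma~\ref{equivlent Laplacian}, and Lemma~\ref{conjugate Laplacian}, which are the four ingredients you assemble, including the key observation that the conjugate-representation case is handled by the fact that $\overline{M}(\overline{M})^{\text H}$ is the transpose of $MM^{\text H}$. Your write-up actually spells out the case analysis (equivalent to $M_2$ versus $\overline{M_2}$) and the nonvanishing of the $\alpha_i$ more explicitly than the paper does, but the route is the same.
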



As a consequence of Theorem~\ref{regular} and Theorem~\ref{key lemma}, we have the following corollaries of Theorem~\ref{Main}. 

\begin{cor}\label{regular-Jac}
If $\cM$ is regular matroid, then $\Jac(\cM)$ is well-defined. 
\end{cor}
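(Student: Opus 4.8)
The plan is to recognize that Corollary~\ref{regular-Jac} is an immediate specialization of Theorem~\ref{Main}(1), so the only task is to check that a regular matroid meets the hypotheses of that theorem. First I would recall the fact noted in the introduction and in Section~\ref{basic} that every regular matroid is a $\sqrt[6]{1}$-matroid: a regular matroid admits a totally unimodular representation $M$ over $\C$ (in fact over $\Z$), and since every subdeterminant of a totally unimodular matrix lies in $\{0,\pm 1\}\subseteq\H$, such an $M$ is an $\H$-representation of $\cM$. Consequently the module $\Jac(M)$ of Definition~\ref{maindf} is defined for $\cM$, and it makes sense to ask whether it depends on the choice of representation.

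Next I would invoke Theorem~\ref{regular}, which asserts precisely that regular matroids are uniquely representable over $\H$. This is exactly the standing hypothesis of Theorem~\ref{Main}. Feeding this into Theorem~\ref{Main}(1) then yields directly that $\Jac(\cM)$ is independent of the choice of $\H$-representation, i.e.\ that $\Jac(\cM)$ is well-defined. If desired, one can also record the quantitative consequence from Theorem~\ref{Main}(2): choosing any full-rank $\H$-representation $M_{r\times n}$ and computing the Smith normal form of $MM^{\text H}$ over $\E$ gives $\Jac(\cM)\cong_\E\bigoplus_{i=1}^r \E/(\alpha_i)$.

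The main point to be careful about is conceptual rather than computational: the unique-representability input (Theorem~\ref{regular}) is the statement that any two $\H$-representations of $\cM$ are related by the equivalence of Definition~\ref{equivalent} up to complex conjugation, and it is this fact — combined with the invariance of the Smith normal form of $MM^{\text H}$ under both equivalence (Lemma~\ref{equivlent Laplacian}) and conjugation (Lemma~\ref{conjugate Laplacian}) — that was already packaged into Theorem~\ref{Main}. Since all of that machinery has been established earlier in the section, there is no genuine obstacle here; the corollary follows in one line by combining Theorem~\ref{regular} with Theorem~\ref{Main}. The substantive work has been deferred to those two results (Theorem~\ref{regular} itself citing \cite{Baker-Oliver2020,Semple1996}), and the corollary merely specializes them to the regular case.
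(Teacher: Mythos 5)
Your proof is correct and matches the paper's own reasoning exactly: the paper derives Corollary~\ref{regular-Jac} as an immediate consequence of Theorem~\ref{regular} (unique representability of regular matroids over $\H$) combined with Theorem~\ref{Main}(1). Your additional verification that a totally unimodular representation is an $\H$-representation is a sensible bit of care, but otherwise the argument is the same one-line specialization.
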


\begin{cor}
If $\cM$ is a $3$-connected $\sqrt[6]{1}$-matroid, then $\Jac(\cM)$ is well-defined. 
\end{cor}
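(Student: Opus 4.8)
The plan is to deduce this immediately by chaining together the two prior results, since all the substantive work has already been done. The statement is purely a matter of verifying that the hypothesis of Theorem~\ref{Main} is satisfied. First I would invoke Theorem~\ref{key lemma}: because $\cM$ is a $3$-connected $\sqrt[6]{1}$-matroid, it is uniquely representable over $\H$. This is precisely the hypothesis required by Theorem~\ref{Main}.

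With unique representability in hand, I would then apply Theorem~\ref{Main}(1) directly to conclude that $\Jac(\cM)$ is well-defined, i.e.\ independent of the choice of $\H$-representation. Concretely, this is saying that for any two $\H$-representations $M_1$ and $M_2$ of $\cM$, the $\E$-modules $\Jac(M_1)$ and $\Jac(M_2)$ are isomorphic. Unpacking why this follows: unique representability means $M_1$ is equivalent to either $M_2$ or $\overline{M_2}$; Lemma~\ref{equivlent Laplacian} handles the first case by showing equivalent representations yield Laplacians with the same Smith normal form, while Lemma~\ref{conjugate Laplacian}(1) handles the conjugate case by showing $MM^{\text H}$ and $\overline{M}(\overline{M})^{\text H}$ share an SNF; in either case Corollary~\ref{Jac-Smith} then forces $\Jac(M_1)\cong_\E\Jac(M_2)$.

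Honestly, there is essentially no obstacle to overcome here, as the corollary is a formal consequence. The only conceivable subtlety is a bookkeeping one: one should confirm that Theorem~\ref{key lemma} applies without any additional regularity or connectivity caveats beyond $3$-connectedness, and that the reduction to a full-rank $r\times n$ representation (Lemma~\ref{rref}) is compatible with the equivalence used in the uniqueness statement. Both of these are already addressed in the cited results, so I expect the proof to be a single sentence combining Theorem~\ref{key lemma} and Theorem~\ref{Main}, in exact parallel to how Corollary~\ref{regular-Jac} is derived from Theorem~\ref{regular} and Theorem~\ref{Main}.
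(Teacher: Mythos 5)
Your proposal is correct and matches the paper exactly: the paper derives this corollary by combining Theorem~\ref{key lemma} ($3$-connectedness implies unique representability over $\H$) with Theorem~\ref{Main}(1), precisely as you do. Your additional unpacking of why Theorem~\ref{Main} holds (via Lemma~\ref{equivlent Laplacian}, Lemma~\ref{conjugate Laplacian}, and Corollary~\ref{Jac-Smith}) simply restates the paper's own proof of that theorem, so there is no divergence in approach.
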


We now present an example (which was not so easy to find!) showing that the Jacobian of a $\sqrt[6]{1}$-matroid is not well-defined in general. 
\begin{prop}\label{nonexample}
There exists a $\sqrt[6]{1}$-matroid admitting two $\H$-representations $M$ and $M'$ such that $\Jac(M)\ncong_\E\Jac(M')$.    
\end{prop}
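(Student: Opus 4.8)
The plan is to realize the example as a \emph{2-sum}. Any counterexample must live among $\sqrt[6]{1}$-matroids that fail to be uniquely representable, and by Theorem~\ref{key lemma} and Theorem~\ref{regular} such a matroid can be neither $3$-connected nor regular. The natural source of inequivalent representations is therefore the gluing freedom in a $2$-sum $\cM=\cM_1\oplus_2\cM_2$. If $\mathbf{a}\mathbf{b}^{\top}$ denotes the rank-one coupling block in the explicit representation furnished by Theorem~\ref{2-sum of partial fields} and Remark~\ref{2-sum of representation}, then replacing this block by $\omega\,\mathbf{a}\mathbf{b}^{\top}$ produces a second matrix $M'$ representing the \emph{same} matroid $\cM$: multiplying a submatrix by the unit $\omega\in\E$ scales certain minors by powers of $\omega$ and leaves the rest unchanged, so (since $\H$ is closed under multiplication by $\mu_6$) every subdeterminant stays in $\H$ and the same maximal minors vanish. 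In particular $M$ and $M'$ have equally many bases, so $\Jac(M)$ and $\Jac(M')$ automatically have the same order, and the entire difficulty is concentrated in their module structure.

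First I would fix small \emph{non-regular} components $\cM_1,\cM_2$, for instance copies of $U_{2,4}$, whose reduced representations already involve genuine $\omega$-entries. Non-regularity is essential: if both components were regular, then by Theorem~\ref{regular} the twisted and untwisted representations would be forced to be equivalent up to conjugation, and by Lemma~\ref{equivlent Laplacian} together with Lemma~\ref{conjugate Laplacian} their Laplacians would then share a Smith normal form. The presence of $\omega$-entries in the components is precisely what makes them rigid enough that the $\omega$-twist on the coupling block cannot be reabsorbed by the scaling, interchange, and pivoting moves of Definition~\ref{equivalent}; heuristically, scaling a shared column or a top row to reach the coupling block necessarily disturbs the component blocks $D_1$ or $D_2$, which a non-regular component no longer tolerates.

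Having written down explicit $M$ and $M'$, I would verify directly that each is an $\H$-matrix representing $\cM$ (a finite check of subdeterminants, or the constructive $2$-sum of Remark~\ref{2-sum of representation}), and then compute the Laplacians $MM^{\text H}$ and $M'(M')^{\text H}$ over $\E$. Using the greatest-common-divisor description of the elementary divisors in Theorem~\ref{SNF}(2), I would determine the two normal forms $(\alpha_1,\dots,\alpha_r)$ and $(\alpha'_1,\dots,\alpha'_r)$ and exhibit an index at which the associated $\E$-ideals disagree. The cleanest available obstruction is the one recorded in the remark following Lemma~\ref{conjugate Laplacian}: it suffices to arrange that one normal form contains an elementary divisor generating $(2+\omega)$ while the other contains $(2+\overline{\omega})$ (or, more generally, two divisors of equal norm that are not associate in $\E$), since $\E/(2+\omega)\ncong_\E\E/(2+\overline{\omega})$. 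By Corollary~\ref{Jac-Smith} this gives $\Jac(M)\ncong_\E\Jac(M')$. To obtain the stronger fact, announced in the introduction, that the two Jacobians differ even as $\Z$-modules, I would instead arrange that the underlying abelian-group invariant factors of the two Laplacians disagree.

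The main obstacle is the search itself, which is why the example is hard to find. Most $\omega$-twists of a $2$-sum are harmless: the twist is frequently reabsorbed by scalings, and even when the representations are genuinely inequivalent the two Laplacians can still have identical Smith normal forms, since conjugating the entire matrix never changes the Laplacian's normal form by Lemma~\ref{conjugate Laplacian}(1), and the constraint $\alpha_i^6\in\Z$ from Lemma~\ref{conjugate Laplacian}(2) sharply limits how the elementary divisors can move. One must therefore choose the components and the coupling vectors $\mathbf a,\mathbf b$ so that the twist lands on an elementary divisor of composite, non-real type (such as $2+\omega$), where multiplication by $\omega$ genuinely alters the generated ideal and not merely its unit class. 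Confirming that \emph{no} sequence of the allowed equivalence moves undoes this effect---equivalently, that the two computed Smith normal forms are truly distinct---is the crux of the argument.
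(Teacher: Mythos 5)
Your overall framing is right---the counterexample must fail unique representability, hence (by Theorems~\ref{regular} and~\ref{key lemma}) be neither regular nor $3$-connected, and a $2$-sum is indeed where the paper finds it---but the specific twist you build the argument around provably cannot work. Replacing the coupling block $\mathbf{a}\mathbf{b}^{\top}$ by $\omega\,\mathbf{a}\mathbf{b}^{\top}$ \emph{always} produces an equivalent representation in the sense of Definition~\ref{equivalent}, no matter how non-regular the components are: in the $2$-sum representation
\[
M=\begin{bmatrix} A_1 & \mathbf{a}\mathbf{b}^{\top} \\ 0 & A_2\end{bmatrix},
\]
multiply each of the top $r_1$ rows by $\omega$, then multiply each column indexed by $E_1$ by $\omega^{-1}$. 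Because the block below $A_1$ is zero, these column scalings restore $A_1$ exactly and touch nothing else, and the result is precisely your $M'$. So $M'$ is equivalent to $M$, and by Lemma~\ref{equivlent Laplacian} and Corollary~\ref{Jac-Smith} we get $\Jac(M)\cong_{\E}\Jac(M')$ for every such twist. Your heuristic that a non-regular component ``no longer tolerates'' the compensating scalings is false: the compensation never disturbs the internal structure of either component. You even flag the danger yourself (``most $\omega$-twists are harmless''), but your construction is exactly one of the harmless ones, so no choice of components or coupling vectors can rescue it.

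The mechanism the paper actually uses is different in kind: it forms $M=M_1\oplus_2 M_8$ and $M'=M_1\oplus_2\overline{M_8}$, i.e., it \emph{complex-conjugates one entire summand} before gluing. Conjugation preserves which subdeterminants vanish, so $M'$ still represents the same matroid; but conjugation is not among the allowed equivalence moves, and conjugating only one of the two summands cannot be undone globally---note that conjugating \emph{all} of $M$ would leave the Laplacian's SNF unchanged by Lemma~\ref{conjugate Laplacian}(1), which is exactly why the asymmetry is essential. With carefully chosen summands (the paper's $M_8$ has many parallel elements to amplify the effect) the two Laplacians then have genuinely different Smith normal forms, $(1,\dots,1,21,147)$ versus $(1,\dots,1,3,1029)$, verified by computer, so the Jacobians differ even as $\Z$-modules. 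This one-sided conjugation is the missing idea in your proposal; without it, or some other genuine source of inequivalence, there is nothing for your intended SNF comparison to detect.
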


\begin{proof}
Consider the two $\H$-matrices 
\[M_1=
\begin{bmatrix}
1&0&0&0&-\omega^2&\omega^2&1&1 \\
0&1&0&0&\omega^2&-\omega^2&0&0 \\
0&0&1&0&1&-1&1&1 \\ 
0&0&0&1&-\omega^2&\omega&0&0 
\end{bmatrix}\]
and
\[M_8=\left[
\begin{array}{*{21}c}
1&0&0&0&-\omega^2&\omega^2 &1&1&1&1&1&1&1&1&1 \\
0&1&0&0&\omega^2&-\omega^2 &0&0&0&0&0&0&0&0&0 \\
0&0&1&0&1&-1               &1&1&1&1&1&1&1&1&1 \\ 
0&0&0&1&-\omega^2&\omega   &0&0&0&0&0&0&0&0&0 
\end{array}
\right].\]

We identify the first column of $M_1$ and the first column of $M_8$ as the basepoint $p$. Consider the $2$-sums $M=M_1\oplus_2 M_8$ and $M'=M_1\oplus_2 \overline{M_8}$ with respect to $p$ (see Remark~\ref{2-sum of representation}).
Then $M$ and $M'$ are two $\H$-representations of the same matroid $\cM$. With the aid of a computer, we have
\begin{center}
\begin{tikzcd}
MM^{\text H}\arrow{r}[above]{\text{SNF}} & (1,1,1,1,1,21,147),
\end{tikzcd}
\begin{tikzcd}
M'(M')^{\text H}\arrow{r}[above]{\text{SNF}} & (1,1,1,1,1,3,1029).
\end{tikzcd}
\end{center}
By Corollary~\ref{Jac-Smith},  $\Jac(M)\cong_\E\E/(21)\oplus\E/(147)$ and $\Jac(M')\cong_\E\E/(3)\oplus\E/(1029)$.

For the computer code, see 

\url{https://cocalc.com/share/public_paths/bc224d34aa04bda4720a433e8599cd268c66b0db}.

\end{proof}

\begin{rem}
In the above example, $\Jac(M)$ and $\Jac(M')$ are not even isomorphic as $\Z$-modules (abelian groups).  
\end{rem}



\section{Other results}\label{minor section}
\subsection{The order of the Jacobian}

Kirchhoff's celebrated Matrix-Tree Theorem gives a determinantal formula for the number of spanning trees of a graph $G$ in terms of the Laplacian matrix $M$ of $G$. This formula implies that the cardinality of $\Jac(G)$ is equal to the number of spanning trees of $G$. The same proof (which involves just the unimodularity of $M$ as an input to the Cauchy-Binet formula) shows that the cardinality of the Jacobian group of a regular matroid $\cM$ is equal to the number of bases of $\cM$. Since representations of $\sqrt[6]{1}$-matroids are also unimodular, it is no surprise that a similar result holds in our setting. The only difference is that, since $\E$ has rank 2 as a module over $\Z$, our formula involves the {\em square} of the number of bases. The following is implicitly proved in \cite[p. 7]{Lyons}, following the outline just discussed. 

\begin{prop}\label{base}
Let $M$ be an $\H$-representation of a $\sqrt[6]{1}$-matroid $\cM$. Then the number of bases of $\cM$ is equal to $\det(MM^{\text H})$. 
\end{prop}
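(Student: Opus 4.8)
The plan is to compute $\det(MM^{\text H})$ via the Cauchy--Binet formula and identify each surviving term with a basis of $\cM$. By Lemma~\ref{rref} we may assume $M$ is an $r\times n$ matrix of full rank $r$. The Cauchy--Binet formula gives
\[
\det(MM^{\text H})=\sum_{B}\det(M_B)\,\det(M_B^{\text H}),
\]
where the sum ranges over all $r$-element subsets $B\subseteq E$ and $M_B$ denotes the $r\times r$ submatrix of $M$ with columns indexed by $B$. Since $M_B^{\text H}=\overline{M_B}^{\,\mathsf T}$, we have $\det(M_B^{\text H})=\overline{\det(M_B)}$, so each summand equals $\det(M_B)\overline{\det(M_B)}=|\det(M_B)|^2$.

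The key point is now to evaluate $|\det(M_B)|^2$ using the $\H$-matrix hypothesis. By definition of an $\H$-matrix, every subdeterminant of $M$ lies in $\H=\mu_6\cup\{0\}$; in particular $\det(M_B)\in\H$ for every $r$-element subset $B$. If $B$ is \emph{not} a basis of $\cM$, then the columns of $M_B$ are linearly dependent, so $\det(M_B)=0$ and the term contributes nothing. If $B$ \emph{is} a basis, then $M_B$ is invertible, so $\det(M_B)\neq 0$, forcing $\det(M_B)\in\mu_6$ and hence $|\det(M_B)|^2=1$. Therefore
\[
\det(MM^{\text H})=\sum_{B\ \text{a basis}}|\det(M_B)|^2=\#\{\text{bases of }\cM\},
\]
which is the desired formula.

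I expect no serious obstacle here: the argument is the standard Matrix--Tree/Cauchy--Binet computation, and the only substantive input beyond Cauchy--Binet is the observation that $\H$-matrices have all nonzero maximal minors equal to roots of unity, so that each nonzero term contributes exactly $1$. The one point requiring a line of care is the correspondence between nonvanishing of the minor $\det(M_B)$ and $B$ being a basis; this is precisely the statement that $M$ represents $\cM$, so a subset is a basis if and only if the corresponding columns are linearly independent over $\C$. Everything else is bookkeeping, and the result follows directly from Remark~\ref{rem:SRUequivalence} together with the definition of an $\H$-representation.
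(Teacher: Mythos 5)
Your proof is correct and is precisely the argument the paper has in mind: the paper gives no self-contained proof, instead citing the standard Cauchy--Binet/Matrix--Tree computation (attributed to Lyons), which is exactly what you carried out --- nonzero maximal minors of an $\H$-matrix are sixth roots of unity, so each basis contributes $|\det(M[B])|^2=1$. One quibble: Lemma~\ref{rref} preserves the Jacobian but not $\det(MM^{\text H})$, so the full-row-rank assumption (that $M$ is $r\times n$ of rank $r$) should be read as an implicit hypothesis of the proposition rather than something you can reduce to --- if $M$ had redundant rows, $MM^{\text H}$ would be singular and the stated equality would fail.
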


\begin{cor} \label{cor:sizeofJacobian}
Let $M$ be an $\H$-representation of a $\sqrt[6]{1}$-matroid $\cM$. Then the order of $\Jac(M)$ is equal to the square of the number of bases. 
\end{cor}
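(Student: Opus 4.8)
The plan is to deduce this corollary directly from Proposition~\ref{base} together with the structural description of $\Jac(M)$ already established, so the only real content is translating ``order as an abelian group'' into something computable. The key observation is that by Lemma~\ref{rref} we may assume $M$ is an $r\times n$ matrix of full row rank, so that $MM^{\text H}$ is an $r\times r$ matrix of full rank over the Eisenstein integers $\E$. By Corollary~\ref{Jac-Smith} we have $\Jac(M)\cong_\E\bigoplus_{i=1}^r\E/(\alpha_i)$, where the $\alpha_i$ are the elementary divisors of $MM^{\text H}$.

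First I would compute the order of $\Jac(M)$ as a finite abelian group. Since $\E$ is a free $\Z$-module of rank $2$, each quotient $\E/(\alpha_i)$ with $\alpha_i\neq 0$ is a finite abelian group of order $|{\alpha_i}|^2=\alpha_i\overline{\alpha_i}$, i.e.\ the norm $N(\alpha_i)$ of $\alpha_i$ in the number field $\Q(\omega)$. (Here I use that the norm of a nonzero Eisenstein integer equals the index $[\E:(\alpha_i)]$, which is standard for an order in an imaginary quadratic field.) Therefore
\[
|\Jac(M)|=\prod_{i=1}^r N(\alpha_i)=\Bigl|\prod_{i=1}^r\alpha_i\Bigr|^2 .
\]

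Next I would identify the product $\prod_{i=1}^r\alpha_i$ with $\det(MM^{\text H})$ up to a unit. By the Smith normal form (Theorem~\ref{SNF}), there are invertible matrices $S,T$ over $\E$ with $S(MM^{\text H})T=\diag(\alpha_1,\dots,\alpha_r)$; taking determinants gives $\det(S)\det(MM^{\text H})\det(T)=\prod_i\alpha_i$, and since $\det(S),\det(T)$ are units in $\E$, we conclude $\prod_i\alpha_i$ and $\det(MM^{\text H})$ are associates, hence have the same absolute value. Consequently
\[
|\Jac(M)|=\bigl|\det(MM^{\text H})\bigr|^2 .
\]
Finally, by Proposition~\ref{base} the number of bases of $\cM$ equals $\det(MM^{\text H})$ (note this quantity is a nonnegative integer, since $MM^{\text H}$ is Hermitian positive semidefinite and here positive definite), so $|\Jac(M)|$ equals the square of the number of bases, as claimed.

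I do not anticipate a serious obstacle here; the corollary is essentially bookkeeping once Proposition~\ref{base} is granted. The one point deserving care is the norm computation $[\E:(\alpha)]=N(\alpha)$ and the fact that $\det(MM^{\text H})$ is a positive integer rather than merely an element of $\E$ of the right absolute value—both follow from standard facts about $\E$ as a Euclidean imaginary quadratic order and from the positive-definiteness of $MM^{\text H}$ noted in the discussion preceding Proposition~\ref{Laplacian}, so I would state these explicitly but not belabor them.
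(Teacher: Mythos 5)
Your proof is correct and follows essentially the same route as the paper: apply Corollary~\ref{Jac-Smith}, note that $|\E/(\alpha_i)|=|\alpha_i|^2$ and that $\prod_i\alpha_i$ agrees with $\det(MM^{\text H})$ up to a unit, then invoke Proposition~\ref{base}. The paper compresses this into one line, so your write-up simply makes explicit the norm computation, the SNF determinant argument, and the reduction via Lemma~\ref{rref} that the paper leaves implicit.
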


\begin{proof}
By Corollary~\ref{Jac-Smith}, $|\Jac(M)|=\prod_{i=1}^r|\alpha_i|^2=|\det(MM^{\text H})|^2$. The conclusion follows from Proposition~\ref{base}. 
\end{proof}

\subsection{The connection with Jacobian group of regular matroid}\label{regular Jacobian}
Recall that if a matroid $\cM$ is represented by a totally unimodular matrix $M$, then $\cM$ is called a \emph{regular matroid}. The \emph{Jacobian group} of a regular matroid is defined by
\[\Jac_{\Z}(\cM)=\frac{\Z^n}{\Lambda_{\Z}(M)\oplus\Lambda^{*}_{\Z}(M)}(=:\Jac_\Z(M)),\] 
where $n$ is the number of elements of $\cM$, $\Lambda_{\Z}(M)=\ker_{\R}(M)\cap \Z^{n}$, and $\Lambda^{*}_{\Z}(M)=\row_{\R}(M)\cap \Z^n$. Due to the unique representability of regular matroids, this is well-defined; see \cite[Section 2.1]{BBY} and \cite[Section 3.1]{Wager2010}.  


By Corollary~\ref{regular-Jac}, we also have an $\E$-module $\Jac(\cM)$. Since any $\E$-module is naturally a $\Z$-module, we may also view $\Jac(\cM)$ as an abelian group. 

\begin{prop}
Let $\cM$ be a regular matroid. Then we have the following isomorphism of abelian groups:
\[\Jac(\cM)\cong_\Z \Jac_{\Z}(\cM)\oplus \Jac_{\Z}(\cM).\]
\end{prop}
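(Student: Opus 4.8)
The plan is to establish the abelian-group isomorphism $\Jac(\cM)\cong_\Z \Jac_\Z(\cM)\oplus\Jac_\Z(\cM)$ by exploiting the fact that, for a regular matroid, one may choose a totally unimodular (hence \emph{real}, indeed $\{0,\pm1\}$-valued) matrix $M$ as a representation, and that such an $M$ is also an $\H$-matrix representing $\cM$. Since $\cM$ is uniquely representable over $\H$ (Theorem~\ref{regular}), Theorem~\ref{Main} tells us $\Jac(\cM)\cong_\E\Jac(M)$ is independent of the choice of representation, so it suffices to compute $\Jac(M)$ for this specific real $M$ and match it up with $\Jac_\Z(M)$. The key simplification is that when $M$ is real, the Hermitian adjoint $M^{\text H}$ coincides with the ordinary transpose $M^{\text T}$, so $MM^{\text H}=MM^{\text T}$ is a real (in fact integer) symmetric matrix.

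First I would invoke Corollary~\ref{Jac-Smith} to write $\Jac(M)\cong_\E\bigoplus_{i=1}^r\E/(\alpha_i)$, where the $\alpha_i$ are the elementary divisors of $MM^{\text H}$ \emph{over} $\E$. The crucial observation is that $MM^{\text H}=MM^{\text T}$ is a matrix over $\Z\subseteq\E$, and its Smith normal form over $\Z$ agrees with its Smith normal form over $\E$ up to units: because all entries are rational integers, the gcd of its $i\times i$ minors computed in $\Z$ and the gcd computed in $\E$ generate the same ideal of $\E$ (an integer $d$ and the $\E$-ideal $(d)$ have $\E/(d)\cong_\Z (\Z/d)^2$ since $\E\cong_\Z\Z^2$). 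Concretely, if the integer elementary divisors of $MM^{\text T}$ are $(d_1,\dots,d_r)$, then each $\E/(\alpha_i)\cong_\E\E/(d_i)$, and as an abelian group $\E/(d_i)\cong_\Z \Z/d_i\oplus\Z/d_i$ because $\E$ is free of rank $2$ over $\Z$ with $\{1,\omega\}$ a basis, and multiplication by $d_i$ scales both coordinates. Hence $\Jac(M)\cong_\Z\bigoplus_{i=1}^r(\Z/d_i)^2$.

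Next I would identify $\bigoplus_{i=1}^r\Z/d_i$ with $\Jac_\Z(\cM)$. The group $\Jac_\Z(M)=\Z^n/\bigl(\Lambda_\Z(M)\oplus\Lambda_\Z^*(M)\bigr)$ admits exactly the real analogue of Proposition~\ref{Laplacian}: the map $[v]\mapsto[vM^{\text T}]$ gives $\Jac_\Z(M)\cong_\Z\Z^r/\row_\Z(MM^{\text T})$ (the proof is identical, replacing $M^{\text H}$ by $M^{\text T}$ and using that $M$ is a full-rank totally unimodular matrix so that the analogue of Lemma~\ref{row space} holds over $\Z$). By the $\Z$-version of Corollary~\ref{FGM}, this cokernel is $\bigoplus_{i=1}^r\Z/d_i$, where $(d_1,\dots,d_r)$ is precisely the integer Smith normal form of $MM^{\text T}$ used above. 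Combining the two displays yields $\Jac(M)\cong_\Z\bigl(\bigoplus_i\Z/d_i\bigr)^2\cong_\Z\Jac_\Z(M)\oplus\Jac_\Z(M)$, and since both sides are independent of the representation, this gives the stated isomorphism for $\cM$.

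I expect the main obstacle to be justifying cleanly that the Smith normal form of the integer matrix $MM^{\text T}$ computed over $\E$ yields the same invariant factors (up to units) as the one computed over $\Z$, and that passing from $\E/(d)$ to $(\Z/d)^2$ is compatible with this. This is where one must be careful: it is \emph{not} true for a general matrix over $\E$ that its $\E$-elementary divisors come from integers (cf.\ the Remark after Lemma~\ref{conjugate Laplacian}, where $[2+\omega]$ is exhibited with $\E/(2+\omega)\ncong_\E\E/(2+\overline\omega)$), so the argument genuinely relies on the entries of $MM^{\text T}$ lying in $\Z$, equivalently on $M$ being real. Once this integrality point is nailed down, the rest is a routine comparison of two Smith normal form computations for the \emph{same} integer matrix, one viewed over $\Z$ and one over $\E$.
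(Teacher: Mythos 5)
Your proof is correct, but it takes a genuinely different route from the paper's. The paper never touches Smith normal forms here: it writes down the $\Z$-module isomorphism $f:\E^n\to\Z^n\oplus\Z^n$, $(a_j+b_j\omega)_j\mapsto((a_j)_j,(b_j)_j)$, and observes that, because the chosen representation $M$ is a real (totally unimodular) matrix, a vector $a+b\omega$ lies in $\row_\C(M)$ (resp.\ in $\row_\C(M)^\perp$) if and only if both $a$ and $b$ lie in $\row_\R(M)$ (resp.\ in $\ker_\R(M)$); consequently $f$ carries $\Lambda^*_\E(M)$ to $\Lambda^*_\Z(M)\oplus\Lambda^*_\Z(M)$ and $\Lambda_\E(M)$ to $\Lambda_\Z(M)\oplus\Lambda_\Z(M)$, and so descends to an explicit isomorphism of the quotients. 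Your route instead compares two Smith normal form computations of the single integer matrix $MM^{\text H}=MM^{\text T}$: one over $\Z$ (computing $\Jac_\Z(\cM)$ as the cokernel of the integer Laplacian --- the real analogue of Proposition~\ref{Laplacian}, which indeed holds verbatim for totally unimodular $M$ and is \cite[Proposition 2.1.2]{BBY}), and one over $\E$ (computing $\Jac(\cM)$ via Corollary~\ref{Jac-Smith}). The integrality point you flag as the main obstacle is genuine but has a one-line proof via Theorem~\ref{SNF}(2): the $i\times i$ minors of $MM^{\text T}$ are integers; their $\Z$-gcd $d$ lies in the $\E$-ideal they generate (Bezout over $\Z$), while each minor lies in $d\,\E$, so the $\E$-gcd equals $d$ up to a unit of $\E$ --- note that your parenthetical remark about $\E/(d)\cong_\Z(\Z/d)^2$ is a separate (also correct) fact and does not by itself justify the gcd comparison. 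With that detail filled in, your argument is complete. The trade-off: the paper's proof is shorter and produces a canonical, explicitly described isomorphism with no invariant-factor machinery; yours is more computational but yields slightly more, namely that the elementary divisors of $MM^{\text H}$ over $\E$ can be taken to be rational integers, i.e.\ $\Jac(\cM)\cong_\E\bigoplus_i\E/(d_i)\cong_\E\E\otimes_\Z\Jac_\Z(\cM)$ as $\E$-modules --- a statement about the $\E$-module structure that the paper's purely $\Z$-linear isomorphism does not immediately give.
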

\begin{proof}

Let $M_{r\times n}$ be a totally unimodular integer matrix representing $\cM$. Then $M$ is also an $\H$-matrix. 

We have the following $\Z$-module isomorphism:
\begin{align*}
    f: \E^n &\to \Z^n\oplus \Z^n \\ 
    (a_1+b_1\omega,\cdots,a_n+b_n\omega) &\mapsto ((a_1,\cdots,a_n),(b_1,\cdots,b_n)).
\end{align*}
It is easy to check that\[f(\Lambda_{\E}(M))=\Lambda_{\Z}(M)\oplus\Lambda_{\Z}(M)\]and \[f(\Lambda^*_{\E}(M))=\Lambda^*_{\Z}(M)\oplus\Lambda^*_{\Z}(M).\]
Hence $f$ induces the desired isomorphism.
\end{proof}

\subsection{An equivalent definition of the Jacobian and the orthogonal projection}

For a regular matroid represented by a totally unimodular integer matrix $M$, we have isomorphisms \[\Jac_\Z(M)\cong_\Z\frac{\Lambda_{\Z}(M)^\#}{\Lambda_{\Z}(M)}\]
and
\[\Jac_\Z(M)\cong_\Z\frac{\Lambda^*_{\Z}(M)^\#}{\Lambda^*_{\Z}(M)},\]
where $\Lambda^\#$ denotes the \emph{dual lattice} of $\Lambda$, that is, \[\Lambda^\#=\{v\in\Lambda\otimes\R:\langle v,u\rangle\in\Z \; \forall \; u\in\Lambda\}.\]
The isomorphisms are naturally induced by the orthogonal projections of $\R^n$ onto $\ker_{\R}(M)$ and $\row_{\R}(M)$, respectively; see \cite[p. 10]{BBY} and \cite[Lemma 1 of Section 4]{BLN}. For a graph $G$, let $M$ be the incidence matrix with respect to a fixed reference orientation. Then the two orthogonal projections can be written as a sum of matrices indexed by the spanning trees of $G$, respectively; see \cite[Proposition 6.3 and 7.3]{Biggs} and \cite[Theorem 1]{Maurer}. We will generalize these results to $\sqrt[6]{1}$-matroids. We will also give a product formula for the orthogonal projections (cf. Proposition~\ref{proj}).



Let $M_{r\times n}$ be an $\H$-matrix representing a $\sqrt[6]{1}$-matroid $\cM$, where $r$ is the rank of $\cM$. 
Recall that we have the orthogonal decomposition \[\C^n=\row_{\C}(M)\oplus\row_{\C}(M)^\perp\]with respect to the Hermitian inner product. The $\E$-modules $\Lambda^{*}_{\E}(M)=\row_{\C}(M)\cap \E^{n}$ and $\Lambda_{\E}(M)=\row_{\C}(M)^\perp\cap \E^{n}$ are viewed as $\E$-lattices, and their dual $\E$-lattices are defined by
\[\Lambda^*_\E(M)^\#=\{v\in\row_\C(M):v\cdot u^{\text H}\in\E \; \forall \; u\in\Lambda^*_{\E}(M)\}\]
and
\[\Lambda_\E(M)^\#=\{v\in\row_{\C}(M)^\perp:v\cdot u^{\text H}\in\E \; \forall \; u\in\Lambda_{\E}(M)\},\]
respectively, where the vectors are written as row vectors as in Section~\ref{main section}. 

We focus on $\Lambda^{*}_{\E}(M)=\row_\E(M)$. By a similar argument, one can get the results for $\Lambda_{\E}(M)$. 

\begin{prop}\label{proj}Let
\[P:\C^n\to\row_{\C}(M)\] be the \emph{orthogonal projection} of $\C^n$ to $\row_{\C}(M)$, i.e., $P$ is the $\C$-linear map such that $P |_{\row_{\C}(M)^\perp}$ is the zero map and $P |_{\row_{\C}(M)}$ is the identity map. Then:
\begin{enumerate}
\item (Product formula for the orthogonal projection operator) $P(v)=vM^{\text H}(MM^{\text H})^{-1}M$ for any $v\in\C^n$.
\item $P(\E^n)=\Lambda^{*}_{\E}(M)^\#$.
\item $P$ induces the $\E$-module isomorphism \[\Jac(M)=\frac{\E^n}{\Lambda_{\E}(M)\oplus\Lambda^{*}_{\E}(M)}\cong_\E\frac{\Lambda^{*}_{\E}(M)^\#}{\Lambda^{*}_{\E}(M)}.\]
\end{enumerate}
\end{prop}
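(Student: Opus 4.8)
The plan is to establish the three parts in sequence, using the Hermitian structure throughout and reducing as much as possible to standard linear algebra over $\C$ together with the lattice statements already proved.

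First I would prove part (1) by the universal property of the orthogonal projection: any $\C$-linear map that vanishes on $\row_{\C}(M)^\perp$ and restricts to the identity on $\row_{\C}(M)$ must equal $P$, so it suffices to verify that the formula $Q(v) = vM^{\text H}(MM^{\text H})^{-1}M$ has these two properties. Since $M$ has full row rank $r$, the matrix $MM^{\text H}$ is invertible (as noted in the excerpt before Proposition~\ref{Laplacian}), so $Q$ is well-defined. For $v\in\row_{\C}(M)^\perp$ we have $vM^{\text H}=0$, giving $Q(v)=0$; and for $v=wM\in\row_{\C}(M)$ a direct computation gives $Q(wM)=wMM^{\text H}(MM^{\text H})^{-1}M=wM=v$. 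This is the routine part.

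Next, for part (2), I would show the double inclusion $P(\E^n)=\Lambda^*_\E(M)^\#$. The containment $P(\E^n)\subseteq\Lambda^*_\E(M)^\#$ should follow by checking the defining pairing condition: for $v\in\E^n$ and any $u\in\Lambda^*_\E(M)=\row_\E(M)$ (using Lemma~\ref{row space}), I need $P(v)\cdot u^{\text H}\in\E$. Writing $u=wM$ with $w\in\E^r$ and using the self-adjointness of the orthogonal projection $P$ with respect to the Hermitian inner product (equivalently, that $P$ fixes $u\in\row_{\C}(M)$), this pairing collapses to $v\cdot u^{\text H}$, which lies in $\E$ because both $v$ and $u$ have entries in $\E$. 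For the reverse inclusion, given $v\in\Lambda^*_\E(M)^\#\subseteq\row_\C(M)$, I would use the product formula to express $v=vM^{\text H}(MM^{\text H})^{-1}M$ and argue via the integrality of the pairings against a generating set of $\row_\E(M)$—concretely, pairing against the rows of $M$ shows $vM^{\text H}\in\E^r$, and then I must produce an element of $\E^n$ mapping to $v$ under $P$; the natural candidate is to lift $vM^{\text H}\in\E^r$ back through a full-rank $r\times r$ $\H$-submatrix as in the proof of Lemma~\ref{row space}, whose inverse preserves $\E$-integrality.

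Finally, for part (3), I would show that $P$ descends to the claimed isomorphism. Since $P$ is the identity on $\Lambda^*_\E(M)=\row_\E(M)$ and annihilates $\Lambda_\E(M)\subseteq\row_{\C}(M)^\perp$, it sends the sublattice $\Lambda_\E(M)\oplus\Lambda^*_\E(M)$ onto $\Lambda^*_\E(M)$, and by part (2) it maps $\E^n$ onto $\Lambda^*_\E(M)^\#$; hence $P$ induces a surjective $\E$-module map from $\Jac(M)$ to $\Lambda^*_\E(M)^\#/\Lambda^*_\E(M)$. Injectivity amounts to the statement $P^{-1}(\Lambda^*_\E(M))\cap\E^n=\Lambda_\E(M)\oplus\Lambda^*_\E(M)$: if $v\in\E^n$ satisfies $P(v)\in\Lambda^*_\E(M)=\row_\E(M)$, then $v-P(v)\in\E^n$ (since $P(v)\in\row_\E(M)\subseteq\E^n$) lies in $\ker P\cap\E^n=\row_{\C}(M)^\perp\cap\E^n=\Lambda_\E(M)$, so $v=(v-P(v))+P(v)\in\Lambda_\E(M)\oplus\Lambda^*_\E(M)$. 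I expect the main obstacle to be the reverse inclusion in part (2): unlike over $\Z$, one cannot simply invoke orthogonality, and one must carefully exploit that $M$ is an $\H$-matrix so that the relevant inverse $(M'')^{-1}$ preserves $\E$-integrality, exactly as in Lemma~\ref{row space}; verifying this integral lifting is the crux of the argument.
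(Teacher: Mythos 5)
Your proof is correct, and part (1) coincides with the paper's argument (verify that the formula kills $\row_\C(M)^\perp$ and fixes $\row_\C(M)$, then invoke uniqueness of the orthogonal projection). For parts (2) and (3), however, you take a genuinely different route. The paper factors $P$ as the chain $\C^n \to \C^r \to \C^r \to \row_\C(M)$ given by $\cdot M^{\text H}$, $\cdot (MM^{\text H})^{-1}$, $\cdot M$, and tracks the lattice $\E^n$ through it: the first map sends $\E^n$ onto $\E^r$ (this is the same integral lifting through an invertible $\H$-submatrix that you single out as the crux), while the other two maps are isomorphisms onto $\{x\in\C^r : xMM^{\text H}\in\E^r\}$ and then onto $\{v\in\row_\C(M): vM^{\text H}\in\E^r\}=\Lambda^{*}_{\E}(M)^\#$; part (3) is then read off by passing this same factorization to quotients, the leftmost isomorphism being precisely Proposition~\ref{Laplacian}. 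You instead prove (2) by double inclusion --- self-adjointness of $P$ for the easy inclusion (a fact the paper never needs to invoke), the integral lift for the reverse --- and prove (3) by directly checking that the induced map is well defined and surjective, with injectivity coming from the decomposition $v=(v-P(v))+P(v)$, using $P(v)\in\row_\E(M)\subseteq\E^n$ and $v-P(v)\in\ker P\cap\E^n=\Lambda_{\E}(M)$. The paper's factorization buys economy and reuse: (2) and (3) fall out of a single diagram, and the identification of $\Jac(M)$ with the cokernel of the Laplacian $MM^{\text H}$ is recycled rather than reproved. Your route buys self-containedness: it bypasses Proposition~\ref{Laplacian} entirely and parallels the classical lattice argument for regular matroids over $\Z$, isolating the one genuinely new ingredient over $\E$ --- the lift of $vM^{\text H}\in\E^r$ through an $\H$-submatrix whose determinant is a unit, so that its inverse has Eisenstein entries by the adjugate formula --- exactly where you said the difficulty lies.
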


\begin{proof}\leavevmode
\begin{enumerate}
\item The orthogonal projection $P$ is unique. For any $v\in\row_\C(M)^\perp$, $vM^{\text H}(MM^{\text H})^{-1}M=0$ by definition. Because  $MM^{\text H}(MM^{\text H})^{-1}M=M$, we have $vM^{\text H}(MM^{\text H})^{-1}M=v$ for any $v\in\row_\C(M)$.
\item By the first part of the proposition, we can factor the map $P$ as
\[
\begin{tikzcd}[column sep=huge]
P: \C^n\arrow[r,twoheadrightarrow, "\cdot M^{\text H}"] & \C^r\arrow[r, "\cdot (MM^{\text H})^{-1}", "\cong" '] &\C^r\arrow[r, "\cdot M", "\cong" '] & \row_\C(M),
\end{tikzcd}
\]where the first map is a surjection and the other two are isomorphisms. 

Then by restricting the map $P$ to $\E^n$, we have $\E$-module homomorphisms \[
\begin{tikzcd}[column sep=large]
P|_{\E^n}: \E^n\arrow[r,twoheadrightarrow, "\cdot M^{\text H}"] & \E^r\arrow[r, "\cdot (MM^{\text H})^{-1}", "\cong" '] &\{v\in\C^r:vMM^{\text H}\in\E^r\}\arrow[r, "\cdot M", "\cong" '] & \{v\in\row_\C(M):vM^{\text H}\in\E^r\},
\end{tikzcd}
\]where the first homomorphism is a surjection and the other two are isomorphisms. Hence $P(\E^n)$ equals $\{v\in\row_\C(M):vM^{\text H}\in\E^r\}$, which is $\Lambda^{*}_{\E}(M)^\#$.

\item By the factorization of $P|_{\E^n}$ in (2), we get the following isomorphisms:

\[
\begin{tikzcd}[column sep=large]
\dfrac{\E^n}{\Lambda_{\E}(M)\oplus\Lambda^{*}_{\E}(M)}\arrow[r, "\cdot M^{\text H}", "\cong" '] & \dfrac{\E^r}{\row_\E(MM^{\text H})}\arrow[r, "\cdot (MM^{\text H})^{-1}", "\cong" '] &\dfrac{\{v\in\C^r:vMM^{\text H}\in\E^r\}}{\E^r}\arrow[r, "\cdot M", "\cong" '] & \dfrac{\Lambda^{*}_{\E}(M)^\#}{\Lambda^{*}_{\E}(M)},
\end{tikzcd}
\]where the leftmost isomorphism comes from Proposition~\ref{Laplacian} and the other two are trivial.
\end{enumerate}
\end{proof}

\subsection{Averaging formula for the orthogonal projection operator}
In this section we give another formula for the orthogonal projection operator $P$, this time as an average over all bases of $\cM$ of certain simple operators related to the fundamental cocircuits of $\cM$. Our main result is Proposition~\ref{matrixN}, which we view as a complex analogue of \cite[Proposition 6.3]{Biggs}. We adopt the same proof technique as in \cite{Biggs}. 

We first introduce some notation. For a matrix $N$ and a set $J$ indexing certain columns of $N$, we denote by $N[J]$ the submatrix of $N$ formed by the columns indexed by $J$. We often use a basis $B$ of $\cM$ as the index set of the columns of $M$ corresponding to $B$.

\begin{df}\label{NB}
Let $B$ be a basis of $\cM$.
\begin{enumerate}
    \item Denote $M_B=(M[B])^{-1}M$. In particular, $M_B[B]$ is the identity matrix. 
    \item Define $N_B$ to be the $n\times n$ matrix whose restriction to the rows indexed by $B$ is $M_B$, and whose other rows are zero.  
    \item Let $e\in B$. The $e$-th row of $N_B$ is denoted by  $f_B(e)$. 
\end{enumerate}
\end{df}

\begin{rem}
The matrix $M_B$ is known as a \emph{standard representative matrix} for $\cM$ with respect to $B$. The vector $f_B(e)$ should be viewed as the ``signed'' fundamental cocircuit of $e$ with respect to $B$; see Remark~\ref{funda-cocircuit}. More precisely, 
the $e^{\rm th}$ row $f_B(e)$ of $N_B$ is an $\H$-cocircuit (in the sense of \cite{Baker-Bowler2019}) of the $\H$-matroid associated to $M$; it is in fact the unique $\H$-cocircuit with value 1 at $e$ whose support is the fundamental cocircuit of $e$ with respect to $B$.
\end{rem}

We establish some basic properties of $N_B$.

\begin{lem}\label{unique vector}
Let $B$ be a basis of $\cM$. Then for any row vector $u\in\C^r$, there exists a unique $v\in\row_\C(M)$ such that $v[B]=u$. 
\end{lem}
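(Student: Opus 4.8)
The plan is to prove both existence and uniqueness by reducing to linear algebra over $\C$, exploiting that $M[B]$ is invertible because $B$ is a basis of $\cM$. For \emph{existence}, I would exhibit a vector explicitly: since $M[B]$ is an $r\times r$ full-rank submatrix of $M$, the vector $v = u\,(M[B])^{-1}M$ lies in $\row_\C(M)$ by construction (it is an $\E$- or $\C$-linear combination of the rows of $M$). Restricting to the columns indexed by $B$ gives $v[B] = u\,(M[B])^{-1}M[B] = u$, as desired. In the notation of Definition~\ref{NB}, this is simply $v = u\,M_B$, since $M_B = (M[B])^{-1}M$ and $M_B[B]$ is the identity.

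For \emph{uniqueness}, suppose $v_1, v_2 \in \row_\C(M)$ both satisfy $v_i[B] = u$. Then $w := v_1 - v_2 \in \row_\C(M)$ satisfies $w[B] = 0$. Writing $w = z\,M$ for some $z \in \C^r$ (possible because $w$ is in the row space and $M$ has full row rank $r$), restricting to the columns in $B$ gives $w[B] = z\,M[B] = 0$. Since $M[B]$ is invertible, $z = 0$, hence $w = 0$ and $v_1 = v_2$. This completes the uniqueness argument.

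I do not expect a genuine obstacle here; the statement is essentially the observation that the coordinate-projection map $\row_\C(M) \to \C^r$, $v \mapsto v[B]$, is a $\C$-linear isomorphism. Both spaces have dimension $r$, so it suffices to check injectivity (the uniqueness step above) or surjectivity (the existence step above); either one alone implies the other by dimension count, but giving the explicit formula $v = u\,M_B$ is cleanest and sets up the subsequent use of $N_B$. The only point requiring the $\sqrt[6]{1}$-hypothesis, or really just the matroid structure, is that $M[B]$ is invertible precisely when $B$ is a basis — this is the definition of a basis in terms of the representation $M$.
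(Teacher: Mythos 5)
Your proof is correct and follows essentially the same route as the paper's: both arguments come down to writing elements of $\row_\C(M)$ as $xM$ and solving $x\,M[B]=u$ via the invertibility of $M[B]$, which is exactly your explicit formula $v=u\,(M[B])^{-1}M=u\,M_B$. The paper merely merges your existence and uniqueness steps into one sentence, so there is nothing substantive to reconcile.
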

\begin{proof}
We write $v=xM$, where $x\in\C^r$. Because $v[B]=x(M[B])$, $x$ is determined by the linear equation $x(M[B])=u$. Because $B$ is a basis, the matrix $M[B]$ is invertible, and hence the equation has a unique solution $x$. 
\end{proof}

\begin{lem}\label{easy half}
Let $B$ be a basis of $\cM$. \begin{enumerate}
    \item For any row vector $v\in\row_\C(M)^\perp$, $N_Bv^{\text H}=0$. 
    \item For any row vector $v\in\row_\C(M)$, $vN_B=v$.
\end{enumerate}
\end{lem}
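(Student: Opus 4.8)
The plan is to encode the combinatorial definition of $N_B$ as a single matrix product so that both assertions become one-line computations. Introduce the $n\times r$ \emph{placement matrix} $\iota_B$ whose $k$-th column is the standard basis vector $\mathbf e_{b_k}\in\C^n$, where $b_k$ is the $k$-th element of $B$; equivalently, left multiplication by $\iota_B$ inserts the $r$ rows of an $r\times n$ matrix into the rows indexed by $B$ and pads the remaining rows with zeros. With this notation one checks directly from Definition~\ref{NB} that $N_B=\iota_B M_B=\iota_B (M[B])^{-1}M$, since the $b_k$-th row of $\iota_B M_B$ is the $k$-th row of $M_B$, namely $f_B(b_k)$, and all other rows vanish. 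Two features of this factorization drive the proof: every nonzero row of $N_B$ is a $\C$-linear combination of the rows of $M$, so lies in $\row_\C(M)$; and right multiplication by $\iota_B$ extracts a $B$-indexed subvector, i.e.\ $v\iota_B=v[B]$ for any row vector $v\in\C^n$.

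For part (1), I would first observe that $v\in\row_\C(M)^\perp$ means $vM^{\text H}=0$, and taking conjugate transposes gives $Mv^{\text H}=(vM^{\text H})^{\text H}=0$. Then I simply compute
\[
N_Bv^{\text H}=\iota_B (M[B])^{-1}M\,v^{\text H}=\iota_B (M[B])^{-1}(Mv^{\text H})=0,
\]
so the statement follows without having to inspect the rows of $N_B$ individually.

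For part (2), I would compute $vN_B=v\,\iota_B (M[B])^{-1}M=v[B]\,(M[B])^{-1}M$ using $v\iota_B=v[B]$. Since $v\in\row_\C(M)$ we may write $v=xM$ for some $x\in\C^r$, whence $v[B]=x\,M[B]$ and therefore $v[B](M[B])^{-1}=x$, giving $vN_B=xM=v$. Alternatively, and perhaps more cleanly, one notes that $vN_B\in\row_\C(M)$ (being a combination of the rows $f_B(e)$) and that $(vN_B)[B]=v[B]\,(M[B])^{-1}M[B]=v[B]$, since $M_B[B]$ is the identity; as $v$ itself lies in $\row_\C(M)$ with the same $B$-restriction, Lemma~\ref{unique vector} forces $vN_B=v$.

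The proof is genuinely routine — this is the ``easy half'' — and the only place requiring care is the bookkeeping: correctly establishing the factorization $N_B=\iota_B(M[B])^{-1}M$ and keeping track of the conjugate transpose, in particular the passage from $vM^{\text H}=0$ to $Mv^{\text H}=0$ in part (1). Once the factorization is in place, both identities are immediate.
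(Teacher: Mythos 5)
Your proof is correct and takes essentially the same route as the paper's: part (1) rests on the observation that every nonzero row of $N_B$ lies in $\row_\C(M)$ (which your factorization $N_B=\iota_B(M[B])^{-1}M$ merely encodes in matrix form), and your ``alternative'' argument for part (2) is precisely the paper's proof via Lemma~\ref{unique vector}. The only difference is cosmetic: your primary computation for part (2) inlines the proof of Lemma~\ref{unique vector} (solving $v[B]=xM[B]$ for $x$) instead of citing it.
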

\begin{proof}\leavevmode
\begin{enumerate}
    \item Because every non-zero row of $N_B$ is a row of $M_B$, it is in $\row_\C(M)$. By definition, $N_Bv^{\text H}=0$. 
    \item Note that $vN_B\in\row_\C(M)$ because every row of $N_B$ is in $\row_\C(M)$. By Lemma~\ref{unique vector}, we only need to check \[(vN_B)[B]=v[B].\] By Definition~\ref{NB}, we have $vN_B=v[B]M_B$, and hence \[(vN_B)[B]=(v[B]M_B)[B]=(v[B])(M_B[B])=v[B].\] 
\end{enumerate}

\end{proof}

The proof of the following result relies on Lemma~\ref{Hermitian}, which asserts that $N=N^H$. 
As the proof of Lemma~\ref{Hermitian} is somewhat technical, we postpone it to the end of this section. 

\begin{prop}\label{matrixN}
Let \[N=\sum_{B\text{ basis of } \cM}N_B\]
and write $\tilde{N} = \frac{1}{\kappa} N,$
where $\kappa$ is the number of bases of $\cM$.
Then the orthogonal projection operator $P$ (defined in Proposition~\ref{proj}) satisfies $P(v)=v \tilde{N}$ for any $v\in\C^n$.
\end{prop}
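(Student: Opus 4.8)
The goal is to show that the averaged operator $\tilde N = \frac{1}{\kappa}\sum_B N_B$ coincides with the orthogonal projection $P$ onto $\row_\C(M)$. Since $P$ is the unique $\C$-linear map that is the identity on $\row_\C(M)$ and zero on $\row_\C(M)^\perp$, my plan is to verify these two defining properties directly for $\tilde N$, using the decomposition $\C^n = \row_\C(M)\oplus\row_\C(M)^\perp$. It suffices to check that $v\tilde N = v$ for all $v\in\row_\C(M)$ and that $v\tilde N = 0$ for all $v\in\row_\C(M)^\perp$.

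The first property is immediate from Lemma~\ref{easy half}(2): for $v\in\row_\C(M)$ we have $vN_B=v$ for \emph{every} basis $B$, so $v\tilde N = \frac{1}{\kappa}\sum_B vN_B = \frac{1}{\kappa}(\kappa v) = v$. The second property is where the real content lies. For $v\in\row_\C(M)^\perp$, I would like to conclude $v\tilde N=0$, but Lemma~\ref{easy half}(1) only directly gives $N_B v^{\text H}=0$, i.e.\ it controls $N_B$ acting on the \emph{right} by $v^{\text H}$, whereas I need $vN_B$, which is $v$ acting on the \emph{left}. To bridge this gap I would invoke the announced Lemma~\ref{Hermitian}, namely that $N=N^{\text H}$ (so $\tilde N$ is Hermitian). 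Granting this, for $v\in\row_\C(M)^\perp$ I can write $v\tilde N = \overline{\overline{(v\tilde N)}} = \overline{\tilde N^{\text H}\,\overline{v}^{\,\text{T}}}{}^{\,\text{T}}$; more cleanly, since $\tilde N=\tilde N^{\text H}$, taking conjugate transposes gives $(v\tilde N)^{\text H} = \tilde N^{\text H} v^{\text H} = \tilde N v^{\text H} = \frac{1}{\kappa}\sum_B N_B v^{\text H} = 0$ by Lemma~\ref{easy half}(1). Hence $v\tilde N=0$, as required.

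Having verified both properties, $\tilde N$ agrees with the orthogonal projection $P$ on each summand of $\C^n=\row_\C(M)\oplus\row_\C(M)^\perp$, and by linearity $\tilde N = P$ on all of $\C^n$, giving $P(v)=v\tilde N$.

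The main obstacle is clearly the Hermitian symmetry $N=N^{\text H}$, which the statement explicitly flags as technical and postpones. Everything else is a formal two-line averaging argument built from the two parts of Lemma~\ref{easy half}; the substance of the proof is entirely concentrated in establishing that the sum of the (individually non-Hermitian) matrices $N_B$ over all bases is Hermitian. I expect that Lemma~\ref{Hermitian} is proved by a combinatorial identity relating the entries $f_B(e)$ (the signed fundamental cocircuits) across different bases, presumably exploiting the $\H$-matrix/unimodularity structure and a basis-exchange or cofactor argument in the spirit of \cite{Biggs}, so that off-diagonal entries at positions $(e,e')$ and $(e',e)$ become complex conjugates after summation over all bases.
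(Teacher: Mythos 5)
Your proof is correct and takes essentially the same approach as the paper: verify $v\tilde N = v$ on $\row_\C(M)$ via Lemma~\ref{easy half}(2), and obtain $v\tilde N = 0$ on $\row_\C(M)^\perp$ by combining Lemma~\ref{easy half}(1) with the Hermitian symmetry $N=N^{\text H}$ from Lemma~\ref{Hermitian}, exactly as the paper does (including deferring that lemma). Your conjugate-transpose manipulation $(v\tilde N)^{\text H}=\tilde N v^{\text H}=0$ just makes explicit the step the paper leaves implicit.
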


\begin{proof}
It is enough to show that 
    \[v(\kappa^{-1}N)=\begin{cases} 
      v & v\in\row_\C(M) \\
      0 & v\in\row_\C(M)^\perp. \\
\end{cases}\]
The first identity follows from Lemma~\ref{easy half}, and the second by combining Lemma~\ref{easy half} with the fact that $N=N^{\text H}$ (which we will show in Lemma~\ref{Hermitian}).
\end{proof}

\begin{rem}
By Proposition~\ref{proj} and Proposition~\ref{matrixN}, we have the identity $N=M^{\text H}(MM^{\text H})^{-1}M$. We leave it as a (hard) exercise to prove the identity directly using linear algebra. This can be used to give another proof of Proposition~\ref{matrixN}.
\end{rem}

The rest of this section is devoted to proving that $N=N^{\text H}$. 
For a row vector $v\in\C^n$ and $e\in E$, we denote by $v[e]$ the $e$-th entry of $v$. 

\begin{lem}\label{fbi1}
The row vector $f_B(e)$ is the unique vector $v\in\row_\C(M)$ such that $v[e]=1$ and $v[B\backslash \{e\}]=0$. Moreover, all the entries of $f_B(e)$ are in $\H$. 
\end{lem}
\begin{proof}
By Definition~\ref{NB}, $f_B(e)$ satisfies the conditions of the first assertion. The uniqueness is guaranteed by Lemma~\ref{unique vector}. Since $M$ is an $\H$-matrix, so is $M_B$. As a row of $M_B$, $f_B(e)$ is therefore defined over $\H$. 
\end{proof}

We abbreviate $(B\backslash\{i\})\cup\{j\}$ as $B-i+j$, where $B\subseteq E$ and $i,j\in E$. 
\begin{lem}\label{fbi2}
Let $B$ be a basis of $\cM$, $i$ an element of $B$, and $j$ an element of $E$. Then $f_B(i)[j]\neq 0$ if and only if $B-i+j$ is a basis of $\cM$.   
\end{lem}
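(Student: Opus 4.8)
The plan is to recognize $f_B(i)[j]$ as a single entry of the standard representative matrix $M_B$ and then invoke the classical correspondence between nonzero entries of $M_B$ and single-element basis exchanges. By Definition~\ref{NB}, the vector $f_B(i)$ is the $i$-th row of $M_B=(M[B])^{-1}M$, so $f_B(i)[j]$ is exactly the $(i,j)$-entry of $M_B$ (rows indexed by $B$, columns by $E$).

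First I would analyze the $j$-th column of $M_B$. Since $M_B=(M[B])^{-1}M$, that column equals $(M[B])^{-1}M[\{j\}]$, the coordinate vector of the column $M[\{j\}]$ of $M$ with respect to the ordered basis $\{M[\{k\}]\}_{k\in B}$ of the column space $\col_\C(M)=\C^r$. Here we use that $B$ is a basis of $\cM$, so these columns form a basis of $\C^r$, and that $M$ has full row rank $r$. Reading off coordinates yields the expansion
\[
M[\{j\}]=\sum_{k\in B}f_B(k)[j]\,M[\{k\}],
\]
in which the coefficient of $M[\{i\}]$ is precisely $f_B(i)[j]$.

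The heart of the proof is then a short basis-exchange argument built on this expansion. If $f_B(i)[j]\neq 0$, I can solve the displayed identity for $M[\{i\}]$ as a linear combination of $M[\{j\}]$ and the columns $\{M[\{k\}]:k\in B\setminus\{i\}\}$; hence these $r$ columns span $\C^r$, so $B-i+j$ is a basis. Conversely, if $f_B(i)[j]=0$, the same identity exhibits $M[\{j\}]$ as a linear combination of $\{M[\{k\}]:k\in B\setminus\{i\}\}$, so the columns indexed by $B-i+j$ are linearly dependent and $B-i+j$ is not a basis.

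I expect no serious obstacle, as this is elementary linear algebra; the only point needing a word of care is the degenerate case $j\in B$. There the identity block $M_B[B]=I_r$ forces $f_B(i)[j]=\delta_{ij}$, which matches the claim, since $B-i+j$ is a basis exactly when $j=i$. It is worth noting that the $\H$-valuedness of the entries of $M_B$ (Lemma~\ref{fbi1}) plays no role in this lemma; only their vanishing or non-vanishing matters.
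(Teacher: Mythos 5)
Your proof is correct, but it takes a somewhat different route from the paper's. The paper first notes that $M_B=(M[B])^{-1}M$ is itself a representation of $\cM$ (being obtained from $M$ by an invertible row transformation), so that $B-i+j$ is a basis if and only if $\det(M_B[B-i+j])\neq 0$; since the columns of $M_B$ indexed by $B\setminus\{i\}$ are standard unit vectors, cofactor expansion along the row $f_B(i)$ collapses this determinant to $\pm f_B(i)[j]$, and the lemma follows in one line. You never invoke the fact that $M_B$ represents $\cM$, nor any determinant: instead you read the $j$-th column of $M_B$ as the coordinate vector of the column $M[\{j\}]$ in the basis of columns indexed by $B$, obtaining the expansion $M[\{j\}]=\sum_{k\in B}f_B(k)[j]\,M[\{k\}]$, and then run a Steinitz-type exchange argument directly on the columns of $M$ (spanning in one direction, linear dependence in the other). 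The trade-off: the paper's proof is shorter once one accepts that row-equivalent matrices represent the same matroid, while yours is more self-contained, replacing that fact with explicit linear algebra, and it makes transparent exactly why the entries of $M_B$ encode single-element basis exchanges. Both arguments dispose of the degenerate case $j\in B$ via the identity block $M_B[B]=I_r$, and your closing observation that the $\H$-valuedness of the entries plays no role here is accurate.
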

\begin{proof}
When $j\in B$, the statement is clearly true. So we may assume that $j\notin B$. 
Note that $M_B$ is a representation of $\cM$, $M_B[B]$ is the identity matrix, and $f_B(i)$ is a row of $M_B$. We get that $f_B(i)[j]\neq 0$ if and only if $\det(M_B[B-i+j])\neq 0$ by expanding the determinants along the row $f_B(i)$. 
\end{proof}
\begin{rem}\label{funda-cocircuit}
Let $B$ be a basis of $\cM$ and $i$ an element of $B$. Then the set $C^*:=\{j\in E: B-i+j\text{ is a basis of } \cM \}$ is a cocircuit of $\cM$, called the
\emph{fundamental cocircuit} of $i$ with respect to $B$. By Lemma~\ref{fbi2}, the support of $f_B(i)$ is exactly this fundamental cocircuit. 
\end{rem}

\begin{lem}\label{exchange}
Let $i$ and $j$ be two \emph{distinct} elements of $E$.
\begin{enumerate}
    \item The map 
    \begin{align*}
    \{B\text{ basis}: i\in B, f_B(i)[j]\neq 0\}& \to \{B\text{ basis}: j\in B, f_B(j)[i]\neq 0\}\\
    B & \mapsto B-i+j
    \end{align*}
    is a bijection.     
    \item When a basis $B$ satisfies $i\in B$ and $f_B(i)[j]\neq 0$, the complex numbers $f_B(i)[j]$ and $f_{B-i+j}(j)[i]$ are conjugate. 
\end{enumerate}
\end{lem}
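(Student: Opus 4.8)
The goal is to prove Lemma~\ref{exchange}, which establishes a conjugate symmetry between the fundamental-cocircuit coefficients $f_B(i)[j]$ and $f_{B-i+j}(j)[i]$. The plan is to handle the two assertions in tandem, since the same basis-exchange combinatorics underlies both: part (1) is the set-theoretic statement that the exchange $B \mapsto B-i+j$ is a well-defined bijection on the relevant families of bases, and part (2) is a quantitative refinement identifying the two coefficients as complex conjugates.

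For part (1), I would first argue that the map is well-defined. By Lemma~\ref{fbi2}, the condition $i \in B$ and $f_B(i)[j] \neq 0$ is equivalent to saying that $B' := B-i+j$ is a basis of $\cM$; and once $B'$ is a basis, the symmetric application of Lemma~\ref{fbi2} (with the roles of $i,j$ and $B,B'$ swapped, noting $j \in B'$ and $B'-j+i = B$) gives $f_{B'}(j)[i] \neq 0$. So the target set is reached. The inverse map is $B' \mapsto B'-j+i$, and the same reasoning shows it lands in the source set; since the two maps are mutually inverse on the nose, the bijection follows. This part I expect to be routine, essentially a bookkeeping exercise with Lemma~\ref{fbi2}.

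The main obstacle is part (2): identifying $f_B(i)[j]$ and $f_{B-i+j}(j)[i]$ as conjugates. The natural approach is via Cramer's rule. Since $f_B(i)$ is the unique row vector in $\row_\C(M)$ with $f_B(i)[i]=1$ and $f_B(i)[B\setminus\{i\}]=0$ (Lemma~\ref{fbi1}), its $j$-th entry can be written as a ratio of $r \times r$ subdeterminants of $M$: concretely $f_B(i)[j] = \det(M[B-i+j]) / \det(M[B])$, up to a sign coming from the column reordering, by expanding the determinant of $M_B[B-i+j]$ along the $f_B(i)$ row as in the proof of Lemma~\ref{fbi2}. Applying the same formula to $f_{B-i+j}(j)[i]$ yields $\det(M[B]) / \det(M[B-i+j])$, again up to sign. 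The key point is then to compare the two signs and the two determinant ratios. Because $M$ is an $\H$-matrix, every nonzero $r \times r$ subdeterminant is a sixth root of unity, hence has modulus $1$; so $\det(M[B])$ and $\det(M[B-i+j])$ both lie on the unit circle, and for a unit-modulus complex number $z$ one has $1/z = \bar z$. Thus each ratio is, up to sign, the conjugate of the other, and I would need to verify that the accompanying signs (arising from the positions of $i$ and $j$ in the two ordered bases) agree so that the identity $f_{B-i+j}(j)[i] = \overline{f_B(i)[j]}$ holds on the nose.

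The delicate point, and where I would spend the most care, is the sign bookkeeping: the determinantal expressions for the two coefficients involve cofactor signs $(-1)^{\text{(position of }i\text{)}+\text{(position of }j\text{)}}$ that depend on an ordering convention for the bases, and one must check these signs cancel correctly between the two ratios. A cleaner route that sidesteps explicit sign-chasing would be to observe that both coefficients are Cramer-type ratios of the \emph{same} pair of subdeterminants $\Delta_1 = \det(M[B])$ and $\Delta_2 = \det(M[B-i+j])$, arranged so that $f_B(i)[j] = \varepsilon\,\Delta_2/\Delta_1$ and $f_{B-i+j}(j)[i] = \varepsilon\,\Delta_1/\Delta_2$ with the \emph{same} sign $\varepsilon$ (since swapping $i \leftrightarrow j$ while simultaneously swapping which basis is in the denominator leaves the relative position parity unchanged). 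Granting this, the unit-modulus property $|\Delta_1| = |\Delta_2| = 1$ gives $\Delta_1/\Delta_2 = \overline{\Delta_2/\Delta_1}$, and since $\varepsilon = \pm 1$ is real and equal to its own conjugate, we conclude $f_{B-i+j}(j)[i] = \overline{f_B(i)[j]}$, as desired.
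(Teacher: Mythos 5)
Your part (1) coincides with the paper's proof: Lemma~\ref{fbi2} converts both defining conditions into the statement that $B-i+j$ (resp.\ $B-j+i$) is a basis, and the two exchange maps are mutually inverse, which is purely set-theoretic. For part (2) you take a genuinely different route. The paper's argument uses no determinants and no signs: setting $z=f_{B-i+j}(j)[i]\neq 0$, the vector $v=z^{-1}f_{B-i+j}(j)$ lies in $\row_\C(M)$, satisfies $v[i]=1$, and vanishes on $B\setminus\{i\}=(B-i+j)\setminus\{j\}$, so by the uniqueness in Lemma~\ref{fbi1} (via Lemma~\ref{unique vector}) it \emph{equals} $f_B(i)$; evaluating at $j$ gives the reciprocal identity $f_B(i)[j]=z^{-1}$, and since the entries of these vectors lie in $\H$ (Lemma~\ref{fbi1}) and hence have modulus $1$ when nonzero, reciprocal equals conjugate. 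Your Cramer's-rule route arrives at the same reciprocal identity, $f_B(i)[j]\cdot f_{B-i+j}(j)[i]=\varepsilon^2=1$, but only modulo the sign bookkeeping that you flag and leave as a claim (``granting this''). That claim is in fact correct, so your proof is completable: if $i$ sits at position $k$ in the sorted basis $B$ and $j$ sits at position $l$ in the sorted basis $B-i+j$, then in each Cramer ratio the ``new'' column occupies the ``old'' column's slot, and restoring sorted order costs $|k-l|$ adjacent transpositions in either direction, so both signs equal $(-1)^{k+l}$. What the paper's rescaling-plus-uniqueness argument buys is precisely the elimination of any ordering convention and sign analysis (and it would work verbatim in any setting where nonzero entries are unimodular); what your argument buys is the explicit formula $f_B(i)[j]=\pm\det(M[B-i+j])/\det(M[B])$, which makes the unimodularity input transparent at the level of $r\times r$ subdeterminants of $M$ rather than entries of $M_B$.
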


\begin{proof}\leavevmode
\begin{enumerate}
    \item By Lemma~\ref{fbi2}, it is equivalent to show that the map 
    \begin{align*}
    \{B\text{ basis}: i\in B, j\notin B, B-i+j\text{ is a basis}\}& \to \{B\text{ basis}: j\in B, i\notin B, B-j+i\text{ is a basis}\}\\
    B & \mapsto B-i+j
    \end{align*}
    is a bijection. This is true because $B\mapsto B-j+i$ is the inverse map (which is a purely set-theoretic fact, it does not require any property of matroids). 
    \item Denote $z=f_{B-i+j}(j)[i](\neq 0)$ and $v=z^{-1}f_{B-i+j}(j)$. Because $v\in\row_\C(M)$, $v[i]=1$, and $v[B\backslash \{e\}]=0$, we have $v=f_B(i)$ by the first half of Lemma~\ref{fbi1}. Hence $f_B(i)[j]=v[j]=z^{-1}=(f_{B-i+j}(j)[i])^{-1}.$ By the second half of Lemma~\ref{fbi1}, the absolute values of the complex numbers $f_B(i)[j]$ and $f_{B-i+j}(j)[i]$ are $1$, so they are conjugate. 

\end{enumerate}
\end{proof}

\begin{lem}\label{Hermitian}
The matrix $N$ is Hermitian.
\end{lem}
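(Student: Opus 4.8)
The plan is to prove that $N = N^{\text H}$ by examining the individual entries of $N$ and matching them up using the bijection from Lemma~\ref{exchange}. Recall that $N = \sum_{B} N_B$, where the nonzero rows of $N_B$ are the vectors $f_B(e)$ for $e \in B$. First I would unwind the definitions to get an entrywise formula. For fixed $i, j \in E$, the $(i,j)$-entry of $N$ is $N[i][j] = \sum_{B} N_B[i][j]$, where $N_B[i][j] = f_B(i)[j]$ when $i \in B$ and $N_B[i][j] = 0$ otherwise. So $N[i][j] = \sum_{B \text{ basis}, \, i \in B} f_B(i)[j]$, and to prove $N = N^{\text H}$ I must show $N[i][j] = \overline{N[j][i]}$ for all $i,j$.

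The natural case split is $i = j$ versus $i \neq j$. For the diagonal entries, I would observe that $N[i][i] = \sum_{B : i \in B} f_B(i)[i] = \sum_{B : i \in B} 1$ by Lemma~\ref{fbi1} (since $f_B(i)[i] = 1$), which is a nonnegative integer and hence equals its own conjugate, so the diagonal causes no trouble. For the off-diagonal case $i \neq j$, I would write
\[
N[i][j] = \sum_{\substack{B \text{ basis} \\ i \in B, \, f_B(i)[j] \neq 0}} f_B(i)[j], \qquad
\overline{N[j][i]} = \sum_{\substack{B' \text{ basis} \\ j \in B', \, f_{B'}(j)[i] \neq 0}} \overline{f_{B'}(j)[i]},
\]
dropping the zero summands from each sum. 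The goal is to see these two sums are equal term by term.

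The key step is to invoke Lemma~\ref{exchange}: part (1) gives a bijection $B \mapsto B - i + j$ between the index set of the first sum and the index set of the second, and part (2) asserts that $f_B(i)[j]$ and $f_{B-i+j}(j)[i]$ are complex conjugates. Setting $B' = B - i + j$, each summand $f_B(i)[j]$ of $N[i][j]$ matches the summand $\overline{f_{B'}(j)[i]} = \overline{f_{B-i+j}(j)[i]} = f_B(i)[j]$ of $\overline{N[j][i]}$. Reindexing the second sum along the bijection therefore makes it identical to the first, giving $N[i][j] = \overline{N[j][i]}$.

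I expect the proof itself to be short, since all the real work has been front-loaded into Lemma~\ref{exchange}; the main obstacle is entirely bookkeeping, namely being careful that the summation index sets match exactly what the bijection in Lemma~\ref{exchange}(1) ranges over (in particular that restricting to bases with $f_B(i)[j] \neq 0$ loses nothing, since the remaining terms are zero) and that the conjugation in Lemma~\ref{exchange}(2) is applied in the correct direction. One subtlety worth flagging is that the bijection requires $i \neq j$, which is exactly why the diagonal is handled separately first. Once the indexing is pinned down, the conclusion $N = N^{\text H}$ is immediate.
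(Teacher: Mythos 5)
Your proof is correct and follows essentially the same route as the paper: both reduce to the entrywise identity $n_{ij}=\overline{n_{ji}}$ and invoke the bijection and conjugation statements of Lemma~\ref{exchange} to match terms. Your treatment is in fact slightly more thorough than the paper's, since you explicitly dispose of the diagonal entries (which the paper leaves implicit, as $f_B(i)[i]=1$ makes them nonnegative integers).
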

\begin{proof}
Let $i$ and $j$ be two distinct elements of $E$. Denote the $(i,j)$ entry of $N$ by $n_{ij}$. We must show that $n_{ij}=\overline{n_{ji}}$. By Definition~\ref{NB}, we have
\[n_{ij}=\sum_{i\in B}f_B(i)[j]\text{ and }n_{ji}=\sum_{j\in B}f_B(j)[i].\] By Lemma~\ref{exchange}, it follows that $n_{ij}=\overline{n_{ji}}$.
\end{proof}



\section{Examples}\label{example}

All the matroids in this section are $3$-connected $\sqrt[6]{1}$-matroids. The relevant computer code can be found via the link in the proof of Proposition~\ref{nonexample}. 

\begin{ex}The Jacobian of the uniform matroid $U_{2,4}$.

It is not hard to check that $U_{2,4}$ is $3$-connected and can be represented by the $\H$-matrix
$M=\begin{bmatrix}
1 & 0 &1 &1 \\
0 &1 &1 & \omega
\end{bmatrix}$. Then $MM^{\text H}=\begin{bmatrix}
3 & 1+\overline{\omega}\\ 
1+\omega & 3
\end{bmatrix}$. 

Making use of Theorem~\ref{SNF}(2), we find\begin{tikzcd}
MM^{\text H}\arrow{r}[above]{\text{SNF}} & (1+\omega, 2(1+\omega)).
\end{tikzcd}     

So, $\Jac(U_{2,4}) \cong_\E \E/(1+\omega) \oplus \E/(2(\omega+1))$. 

As abelian groups, $\Jac(U_{2,4}) \cong_\Z\Z/6\Z\oplus\Z/6\Z$, so the order of $\Jac(U_{2,4})$ is $36$. Since a basis for $U_{2,4}$ is just a 2-element subset of a 4-element set, the number of bases of $U_{2,4}$ is $6$.
\end{ex}

Let $\cM$ be a simple $\sqrt[6]{1}$-matroid of rank $r$. By \cite[Theorem 2.1]{OVW}, when $r\neq 3$, the number of edges of $\cM$ is bounded by $\binom{r+2}{2} -2$ and the bound is attained if and only if $\cM$ is $T_r$; when $r=3$, the number of edges of $\cM$ is bounded by $9$ and the bound is attained if and only if $\cM$ is $AG(2,3)$. We compute the Jacobians of these extremal $\sqrt[6]{1}$-matroids. 

\begin{ex}\label{AG} The Jacobian of the affine geometry $AG(2,3)$.

For the definition of the matroid $AG(2,3)$, see \cite[p. 170]{Oxley2011b}. It is $3$-connected and can be represented by the $\H$-matrix\[
M=\begin{bmatrix}
1 & 0 & 0 & 1 & 0 & 1 & 1 & 1 & 1 \\
0 & 1 & 0 & 1 & 1 & 0 & \overline{\omega} & 1 & \overline{\omega} \\
0 & 0 & 1 & 0 & 1 & -\omega & -\omega & \overline{\omega} & \overline{\omega}
\end{bmatrix};
\]see \cite[p. 653]{Oxley2011b} and \cite[p. 597]{whittle1997}. 
With the aid of a computer, we have
\[
MM^{\text H}=\begin{bmatrix}
6 & 2+2\omega & -2+4\omega\\
2+2\overline{\omega} & 6   & 2+2\omega\\
-2+4\overline{\omega} & 2+2\overline{w} & 6
\end{bmatrix}\begin{tikzcd}
\arrow{r}[above]{\text{SNF}} & (2+2\omega, 2+2\omega,6)
\end{tikzcd}.     
\]

So, $\Jac(AG(2,3))\cong_\E\E/(2+2\omega)\oplus\E/(2+2\omega)\oplus\E/(6)$. 

As abelian groups, 
$\Jac(AG(2,3)) \cong_\Z (\Z/2\Z)^2 \oplus (\Z/6\Z)^4$. 
So the order of $\Jac(AG(2,3))$ is $72^2$. Making use of \cite[Proposition 6.2.3]{Oxley2011b}, one finds that the number of bases of $AG(2,3)$ is indeed $72$, as predicted by Corollary~\ref{cor:sizeofJacobian}.
\end{ex}

\begin{ex} The Jacobian of $AG(2,3)\backslash e$ (Example~\ref{AG} continued).

Every single-element deletion is isomorphic to the same matroid, denoted by $AG(2,3)\backslash e$; see \cite[p. 653]{Oxley2011b}. With the aid of a computer, we know it is $3$-connected, and $\Jac(AG(2,3)\backslash e) \cong_\E \E/(2(1+\omega)) \oplus \E/(8(1+\omega))$.
\end{ex}

\begin{ex} The Jacobian of $T_r$.

Let $r$ be a positive integer. Let $I_r$ denote the identity matrix of size $r$. For $r\geq 2$, let $D_r$ denote the $r\times \binom{r}{2}$ matrix whose columns consist of all $r$-tuples with exactly two non-zero entries, the first equal to $1$ and the second equal to $-1$. Recall that $T_r$ is the matroid represented by the matrix
\[M_r=\left[
\begin{array}{c|c|c|c|c}
1 & 0 \cdots 0  & 1  \cdots 1 & \omega \cdots \omega & 0  \cdots 0\\
\hline 
\begin{array}{c}
     0  \\
     \vdots\\
     0
\end{array} & I_{r-1} & I_{r-1} & I_{r-1} & D_{r-1}
 \end{array}
\right].\]

For a geometric definition of $T_r$, see \cite[p. 166]{OVW}.

If we replace every $\omega$ in $M_r$ with a transcendental number $\alpha$, then by \cite[Lemma 3.1]{OVW} we will obtain a \emph{near-unimodular} matrix, i.e., a matrix whose non-zero subdeterminants are in $\{\pm \alpha^i(1-\alpha)^j:i,j\in\Z\}$. As a direct consequence, $M_r$ is an $\H$-matrix and $T_r$ is a $\sqrt[6]{1}$-matroid. 

The matroid $T_2$ is just $U_4^2$. 
\begin{lem}
$T_r$ is $3$-connected.     
\end{lem}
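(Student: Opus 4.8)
The plan is to prove that $T_r$ is $3$-connected by working directly with the matrix representation $M_r$ and the combinatorial/geometric structure it encodes, rather than invoking a black-box connectivity criterion. First I would recall the relevant definition: a matroid is $3$-connected if it is connected (no nontrivial direct-sum decomposition, i.e.\ no separator) and admits no $2$-separation, meaning there is no partition $E = E_1 \cup E_2$ with $|E_1|, |E_2| \geq 2$ and $r(E_1) + r(E_2) - r(\cM) \leq 1$. So the argument has two layers: rule out $1$-separations (establish connectivity) and rule out $2$-separations.

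The cleanest route is likely to use a known structural fact about $T_r$ together with small-case verification. I would first handle the base cases explicitly: $T_2 = U_{2,4}$ is $3$-connected (already noted in the earlier example), and one could check $T_3$ by hand or by computer. For the inductive step, the natural strategy is to observe that the matrix $M_r$ is built so that $T_r$ contains $T_{r-1}$ as a minor in a controlled way, and to exploit the standard fact (\cite{Oxley2011b}) that if $\cM \backslash e$ or $\cM / e$ is $3$-connected and $e$ is suitably attached, then $\cM$ is $3$-connected. Concretely, I would argue that the columns of $M_r$ are highly interconnected: the identity block $I_{r-1}$, the two copies (the all-ones column block and the $\omega$ block), and the difference block $D_{r-1}$ together force every pair of elements to lie in a common circuit and every potential $2$-separation to be violated.

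The most efficient concrete argument proceeds by contradiction on a hypothetical $2$-separation $(E_1, E_2)$. I would use the rank function computed from $M_r$: because $D_{r-1}$ already connects all $r-1$ coordinate directions among themselves (its support graph is the complete graph $K_{r-1}$, which is connected and $2$-connected for $r-1 \geq 3$), and because each additional column block ($I_{r-1}$, the $1$'s, the $\omega$'s) is "spread across" all coordinates, any attempt to split $E$ into low-rank-interaction pieces forces one side to have corank too small. The key computation is that $r(E_1) + r(E_2) \geq r(\cM) + 2$ for every balanced partition, which is exactly the negation of a $2$-separation. I would phrase this using fundamental circuits: for the generic (transcendental-$\alpha$) version the matroid is the same as $T_r$ since matroid structure depends only on which subdeterminants vanish, so I may reason about the simpler near-regular/transcendental model via \cite[Lemma 3.1]{OVW}.

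The main obstacle I anticipate is making the exclusion of $2$-separations fully rigorous and uniform in $r$, rather than just plausible. The $D_{r-1}$ block behaves like the cographic/graphic structure of $K_{r-1}$, whose connectivity is well understood, but the interaction of $D_{r-1}$ with the three "long" column blocks is what truly prevents a $2$-separation, and controlling this interaction for all $r$ simultaneously requires either a clean inductive reduction (deleting or contracting one carefully chosen element to land back in $T_{r-1}$) or a direct rank inequality that I would need to verify does not degenerate for small $r$. I would therefore expect to spend most of the effort either establishing the right minor relationship $T_r / e \cong T_{r-1}$ (or a deletion analogue) so that the inductive $3$-connectivity lemma from \cite{Oxley2011b} applies cleanly, or else proving the balanced rank inequality above by an explicit count of independent columns on each side of a putative separation.
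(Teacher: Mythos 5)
Your proposal is a plan rather than a proof: both routes you outline stop exactly where the real work begins, and you acknowledge this yourself. The inductive route (establishing $T_r/e \cong T_{r-1}$ or a deletion analogue and then invoking a chain-type lemma from \cite{Oxley2011b}) is never carried out, and such lemmas carry hypotheses (about the element avoiding triangles/triads, or about simplifications and cosimplifications staying $3$-connected) that you do not check. The direct route correctly states the target inequality $\rank(X)+\rank(Y)\geq r+2$ for every partition with $|X|,|Y|\geq 2$, but supports it only with the heuristic that the column blocks of $M_r$ are ``highly interconnected''; no rank computation is actually performed, and you concede that making this rigorous and uniform in $r$ is the main obstacle.

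Moreover, the one concrete structural ingredient you point to --- that the support graph of the $D_{r-1}$ block is the complete graph $K_{r-1}$ --- is the wrong restriction to use. The graphic matroid $\cM(K_{r-1})$ sitting on those columns has rank only $r-2$, so even granting its $3$-connectivity, the induced inequality is $\rank(X)+\rank(Y)\geq (r-2)+2=r$, which falls short by exactly $2$; recovering that deficit from the ``interaction'' with the other blocks is precisely the difficulty you leave unresolved. The paper's proof dissolves it with one observation absent from your proposal: let $J$ be the set of \emph{all} columns of $M_r$ not containing $\omega$ (the first column, both copies of $I_{r-1}$, and $D_{r-1}$). The restriction $T_r|_J$ is the graphic matroid $\cM(K_{r+1})$, which is \emph{spanning} (rank exactly $r$) and $3$-connected because $K_{r+1}$ is (via \cite[Proposition 8.1.9]{Oxley2011b}). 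Given a putative $2$-separation $\{X,Y\}$ with $\rank(X),\rank(Y)<r$ and $r\geq 3$, one checks $|X\cap J|\geq 2$ and $|Y\cap J|\geq 2$ (if, say, $|Y\cap J|\leq 1$, then $X\cap J$ would span $\cM(K_{r+1})$, contradicting $\rank(X)<r$), so $3$-connectivity of $\cM(K_{r+1})$ gives $\rank(X\cap J)+\rank(Y\cap J)\geq r+2$, and monotonicity of the rank function finishes the argument. Your plan could be repaired by replacing $D_{r-1}$ with this larger $\omega$-free set $J$, which is what makes the rank inequality a two-line consequence rather than an open-ended interaction estimate.
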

\begin{proof}

By definition (cf. \cite[p. 293]{Oxley2011b}), we need to show that $T_r$ has neither $1$-separations nor $2$-separations. We will only present the argument for $2$-separations, as the argument for $1$-separations is similar but easier. 

Let $\{X,Y\}$ be a partition of the ground set of $T_r$ such that $|X|\geq 2$ and $|Y|\geq 2$. We need to prove that \[\rank(X)+\rank(Y)\geq r+2.\] 

If $\rank(X)=r$ or $\rank(Y)=r$, then the above inequality clearly holds. The case where $r$ equals $1$ or $2$ is trivial. From now on, we assume $\rank(X)<r$, $\rank(Y)<r$, and $r\geq 3$.

Denote by $J$ the set of elements of $T_r$ which correspond to the columns of $M_r$ not containing $\omega$. Note that the restriction of $T_r$ to $J$ is the graphic matroid $\cM(K_{r+1})$ of the complete graph $K_{r+1}$. Because $\rank(X\cap J)<r$ and $r\geq 3$, we have $|Y\cap J|
\geq 2$. Similarly, we have $|X\cap J|\geq 2$. Because $K_{r+1}$ is $3$-connected, by \cite[Proposition 8.1.9]{Oxley2011b}, $\cM(K_{r+1})$ is also $3$-connected. Hence  $\rank(X\cap J)+\rank(Y\cap J)\geq r+2$, which implies $\rank(X)+\rank(Y)\geq r+2$. 

\end{proof}

We omit a detailed proof of the following result, as the computations involved are rather elaborate.

\begin{prop}\leavevmode
\begin{enumerate}
    \item We have $\det(M_rM_r^{\text H})=3r(r+2)^{r-2}$. Consequently, the number of bases of $T_r$ is $$3r(r+2)^{r-2}.$$
    \item For $r\geq 3$, we have\[
M_rM_r^{\text H}\begin{tikzcd}
\arrow{r}[above]{\text{SNF}} & \:
\end{tikzcd}
\begin{cases} 
      (1,1,r+2,\cdots,r+2,3r(r+2)) & r\equiv 1,3 \pmod{6} \\
      (1,1,r+2,\cdots,r+2,(1+\omega)(r+2),(1+\omega)r(r+2)) & r\equiv 5  \pmod{6} \\
      (1,2,r+2,\cdots,r+2,3r(r+2)/2) & r\equiv 0,4  \pmod{6} \\
      (1,2,r+2,\cdots,r+2,(1+\omega)(r+2),(1+\omega)r(r+2)/2) & r\equiv 2  \pmod{6}.\\
\end{cases}
\]
\end{enumerate}

\end{prop}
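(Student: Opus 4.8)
The plan is to compute the Hermitian ``Laplacian'' $M_rM_r^{\text H}$ in closed form, read off its determinant for part (1), and then reduce it by elementary row and column operations over $\E$ to extract its Smith normal form for part (2). Throughout I will use the identities $(1+\omega)(1+\overline\omega)=3$ (equivalently $|\omega|=1$ and $\omega+\overline\omega=1$) and $(1+\omega)^2=3\omega$, so that $3=\overline\omega(1+\omega)^2$; in particular $1+\omega$ is the ramified prime of $\E$ lying over $3$, while $2$ is inert.

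First I would compute $M_rM_r^{\text H}=\sum_c cc^{\text H}$, the sum ranging over the columns $c$ of $M_r$, by adding the contributions of the five column blocks (the single column $e_1$; the $I_{r-1}$ block; the block of columns $e_1+e_k$; the block of columns $\omega e_1+e_k$; and the $D_{r-1}$ block of columns $e_i-e_j$). Collecting terms yields the bordered matrix
\[
M_rM_r^{\text H}=\begin{bmatrix} 2r-1 & (1+\omega)\,\mathbf{1}^{\text H}\\ (1+\overline\omega)\,\mathbf{1} & (r+2)I_{r-1}-J_{r-1}\end{bmatrix},
\]
where $\mathbf{1}$ is the all-ones column vector of length $r-1$ and $J_{r-1}$ is the all-ones matrix of order $r-1$. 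For part (1), the block $C=(r+2)I_{r-1}-J_{r-1}$ has $\mathbf 1$ as an eigenvector of eigenvalue $3$ and acts as multiplication by $r+2$ on $\mathbf 1^\perp$, so $\det C=3(r+2)^{r-2}$ and $C^{-1}\mathbf 1=\tfrac13\mathbf 1$. A Schur-complement expansion then gives
\[
\det(M_rM_r^{\text H})=\det C\cdot\Bigl(2r-1-(1+\omega)(1+\overline\omega)\,\mathbf 1^{\text H}C^{-1}\mathbf 1\Bigr)=3(r+2)^{r-2}\Bigl(2r-1-3\cdot\tfrac{r-1}{3}\Bigr)=3r(r+2)^{r-2},
\]
and the formula for the number of bases follows at once from Proposition~\ref{base}.

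For part (2) I would exploit that the $-1$ entries of $C$ are units in $\E$. Using them as pivots, together with the column operations subtracting the second column from each later column (which the structure of $J$ renders effective) and a few row additions, one can clear all but three of the rows and columns into a diagonal block of $(r+2)$'s. Concretely, $M_rM_r^{\text H}$ has the same Smith normal form as
\[
\diag(\underbrace{r+2,\dots,r+2}_{r-3})\ \oplus\ (1)\ \oplus\ B',\qquad
B'=\begin{bmatrix} 2(r+1) & (1+\omega)(r+2)\\ (1+\overline\omega)(r+2) & 3(r+2)\end{bmatrix},\qquad \det B'=3r(r+2).
\]
Since $(1+\overline\omega)$ is a unit multiple of $1+\omega$ and $3=\overline\omega(1+\omega)^2$, all entries of $B'$ except $2(r+1)$ are multiples of $(1+\omega)(r+2)$, so the first elementary divisor of $B'$ is $\gamma_1=\gcd\bigl(2(r+1),(1+\omega)(r+2)\bigr)$ and the second is $\gamma_2=3r(r+2)/\gamma_1$. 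Evaluating $\gamma_1$ prime by prime in the Euclidean domain $\E$—at the inert prime $2$ (which divides $\gamma_1$ exactly when $r$ is even) and at $1+\omega$ (which divides $\gamma_1$ exactly when $r\equiv 2\pmod 3$, since then $3\mid r+1$ while $3\nmid r+2$)—gives $\gamma_1\in\{1,\,2,\,1+\omega,\,2(1+\omega)\}$ according to $r\bmod 6$, with $\gamma_2$ simplifying (via $3/(1+\omega)=\overline\omega(1+\omega)$) to the stated largest entry in each case.

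The remaining, and most delicate, step is to assemble the invariant factors of $M_rM_r^{\text H}$ as the elementary-divisor regrouping of the multiset $\{1,\gamma_1,\gamma_2\}\cup\{(r+2)^{\times(r-3)}\}$. When $r\equiv 0,1,3,4\pmod 6$ this multiset, once sorted, already forms a divisibility chain (one checks $2\mid r+2$ when $r$ is even, and $(r+2)\mid\gamma_2$ at every prime), so it is the Smith normal form directly. When $r\equiv 2,5\pmod 6$, however, $\gamma_1$ acquires the factor $1+\omega$ even though $(1+\omega)\nmid(r+2)$ (as $3\nmid r+2$), so the list is not a chain; here I would regroup prime by prime, which pairs one copy of $(r+2)$ with the surplus $1+\omega$ to produce the entry $(1+\omega)(r+2)$ and thereby lowers the count of plain $(r+2)$'s from $r-3$ to $r-4$, exactly matching the stated answer. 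The main obstacle is precisely this bookkeeping: tracking the inert prime $2$ (whose exponent in $\gamma_2$ depends on whether $4\mid r$ or $4\mid r+2$) and the ramified prime $1+\omega$ through the regrouping. Everything else is routine linear algebra over $\E$.
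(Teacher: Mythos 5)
Your proposal is correct, and it holds up under detailed checking. The closed form
\[
M_rM_r^{\text H}=\begin{bmatrix} 2r-1 & (1+\omega)\,\mathbf{1}^{\text H}\\ (1+\overline\omega)\,\mathbf{1} & (r+2)I_{r-1}-J_{r-1}\end{bmatrix}
\]
is right; the Schur-complement argument for part (1) is complete; and your claimed reduction is genuinely achievable (for instance: subtract the second row from each later row, add all later columns to the second column, add $(1+\omega)$ times the second row to the first row, then pivot on one of the resulting unit entries and clear), yielding exactly $\diag(r+2,\dots,r+2)\oplus(1)\oplus B'$ with $r-3$ copies of $r+2$. The evaluation $\gamma_1=\gcd\bigl(2(r+1),(1+\omega)(r+2)\bigr)\in\{1,2,1+\omega,2(1+\omega)\}$ and the prime-by-prime regrouping both check out, and the output matches the stated SNF in all six residue classes as well as the paper's tabulated cases $r=3,\dots,8$.

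Your route follows the same overall strategy as the paper's (omitted) proof sketch---elementary row and column operations over $\E$ followed by Theorem~\ref{SNF}---but the intermediate object and the endgame differ in a meaningful way. The paper reduces only to the bordered matrix $L_r$, which is \emph{not} a direct sum (a column of $1$'s still couples the $(r+2)I_{r-2}$ block to the $2\times 2$ corner), and then extracts the SNF by computing the gcds $d_i$ of $i\times i$ subdeterminants for $i=1,2,3,r-1,r$ and applying Theorem~\ref{SNF}(2). You push the reduction all the way to an honest direct sum, which concentrates the arithmetic in the single block $B'$, whose SNF is immediate from $\gamma_1$ and $\det B'=3r(r+2)$; the price is the prime-by-prime reassembly of the diagonal multiset $\{1,\gamma_1,\gamma_2\}\cup\{(r+2)^{\times(r-3)}\}$ into a divisibility chain, which is precisely where the cases $r\equiv 2,5\pmod 6$ require care (and which you handle correctly). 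What your version buys is transparency: the $r\bmod 6$ case split is visibly governed by the primes $2$ and $1+\omega$ inside $\gamma_1$, rather than emerging from gcds of large minors of $L_r$. You also supply a proof of part (1)---via the spectrum of $(r+2)I-J$---which the paper omits altogether. One small expository point: when evaluating $\gamma_1$ you should say explicitly that no primes other than $2$ and $1+\omega$ can occur, which follows since $\gcd(r+1,r+2)=1$ forces any common prime divisor of $2(r+1)$ and $(1+\omega)(r+2)$ to divide $2$ or $1+\omega$; likewise the exponents are at most $1$ because $r+1$ is odd when $r$ is even, and $3\nmid r+2$ when $3\mid r+1$.
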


For example,
\[M_3M_3^{\text H}\begin{tikzcd}
\arrow{r}[above]{\text{SNF}} & (1,1,45), 
\end{tikzcd}\]
\[M_4M_4^{\text H}\begin{tikzcd}
\arrow{r}[above]{\text{SNF}} & (1,2,6,36), 
\end{tikzcd}\]
\[M_5M_5^{\text H}\begin{tikzcd}
\arrow{r}[above]{\text{SNF}} & (1,1,7,7(1+\omega),35(1+\omega)), 
\end{tikzcd}\]
\[M_6M_6^{\text H}\begin{tikzcd}
\arrow{r}[above]{\text{SNF}} & (1,2,8,8,8,72),
\end{tikzcd}\]
\[M_7M_7^{\text H}\begin{tikzcd}
\arrow{r}[above]{\text{SNF}} & (1,1,9,9,9,9,189), 
\end{tikzcd}\]
\[M_8M_8^{\text H}\begin{tikzcd}
\arrow{r}[above]{\text{SNF}} & (1,2,10,10,10,10,10(1+\omega),40(1+\omega)). 
\end{tikzcd}\]

The idea of the proof is that, by applying elementary row and column operations, we can replace $M_rM_r^{\text H}$ by the following matrix without changing the SNF:

\[L_r=\left[
\begin{array}{c|c|c}
2r+2 & 1+\omega  & 0 \cdots 0  \\
\hline 
r(1+\overline{\omega}) & 0  & 0 \cdots 0  \\
\hline 
\begin{array}{c}
     0  \\
     \vdots\\
     0
\end{array} & 
\begin{array}{c}
     1  \\
     \vdots\\
     1
\end{array} & (r+2)I_{r-2}
 \end{array}
\right].\]

Then we compute all the subdeterminants  of $L_r$ of size $1,2,3,r-1$ and $r$. By Theorem~\ref{SNF}, we obtain the SNF of $L_r$. 

The Jacobian of $T_r$ can be derived directly from the SNF. 
\end{ex}

\begin{ex} The Jacobian of the whirl $\cW^r$.

Another interesting family of matroids is the whirl $\cW^r$, where $r\geq 2$ is the rank. It is a $3$-connected $\sqrt[6]{1}$-matroid; see \cite[p. 659]{Oxley2011b}. 

We sketch the process of computing the Jacobian of $\cW^r$ as follows. 

By \cite[Proposition 8.8.7]{Oxley2011b}, the matrix
\[M_r=\left[
\begin{array}{c|c}
I_r &
\begin{array}{cccccc}
1 & 0 & 0 & \cdots & 0 & \alpha  \\
1 & 1 & 0 & \cdots & 0 & 0  \\
0 & 1 & 1 & \cdots & 0 & 0  \\
\vdots & \vdots & \vdots & \ddots & \vdots & \vdots  \\
0 & 0 & 0 & \cdots & 1 & 0  \\
0 & 0 & 0 & \cdots & 1 & 1  
\end{array}
\end{array}
\right]\]
is representation of $\cW^r$ over all fields if $\alpha\notin\{0,(-1)^r\}$. To obtain an $\H$-representation, we set \[\alpha=\begin{cases} 
      \omega & r\text{ is even} \\
      \omega^2 & r\text{ is odd}. \\
\end{cases}\]Then we show that \[L_r:=M_rM_r^{\text H}=\left[\begin{array}{cccccc}
3 & 1 & 0 & \cdots & 0 & \alpha  \\
1 & 3 & 1 & \cdots & 0 & 0  \\
0 & 1 & 3 & \cdots & 0 & 0  \\
\vdots & \vdots & \vdots & \ddots & \vdots & \vdots  \\
0 & 0 & 0 & \cdots & 3 & 1  \\
\overline{\alpha} & 0 & 0 & \cdots & 1 & 3  
\end{array}\right]\]
and
\[\det(L_r)=f_{2r+2}-f_{2r-2}-1,\]where $f_n$ is the $n$-th Fibonacci number (e.g. $f_6=8$).  

We have the following formula for the SNF (and hence a corresponding formula for Jacobian). 

\begin{prop}
\[
L_r\begin{tikzcd}
\arrow{r}[above]{\text{SNF}} & \:
\end{tikzcd}
\begin{cases} 
      (1,\cdots,1, f_{2r+2}-f_{2r-2}-1) & r\equiv 0,1,3 \pmod{4} \\
      (1,\cdots,1,1+\omega,\dfrac{f_{2r+2}-f_{2r-2}-1}{1+\omega}) & r\equiv 2  \pmod{4}.
\end{cases}
\]
\end{prop}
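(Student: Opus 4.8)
The plan is to compute the Smith normal form of $L_r$ using Theorem~\ref{SNF}(2), which expresses each elementary divisor as the ratio $d_i(L_r)/d_{i-1}(L_r)$ of greatest common divisors of subdeterminants. The matrix $L_r$ is a tridiagonal matrix with $3$'s on the diagonal and $1$'s on the off-diagonals, perturbed by the corner entries $\alpha$ and $\overline{\alpha}$. First I would confirm the claimed formula $\det(L_r) = f_{2r+2} - f_{2r-2} - 1$ for the top elementary divisor: expanding along the first row (or using the standard continuant recursion for tridiagonal determinants, treating the two corner entries via cofactor expansion) should reduce the computation to evaluating determinants of purely tridiagonal $3$-matrices, whose values are Chebyshev-like and satisfy a Fibonacci-type recurrence. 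The substitution $|\alpha|^2 = 1$ (since $\alpha \in \{\omega, \omega^2\}$) is what makes the corner contributions collapse into the clean Fibonacci expression.

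The heart of the argument is showing that all the lower elementary divisors $\alpha_1, \ldots, \alpha_{r-1}$ are trivial except in the residue class $r \equiv 2 \pmod 4$, where exactly one of them equals $1+\omega$. For the first claim I would produce an explicit $(r-1)\times(r-1)$ subdeterminant equal to a unit: deleting the first row and last column (or a symmetric choice) from $L_r$ leaves an upper- or lower-triangular matrix with $1$'s along a diagonal, giving $d_{r-1}(L_r) \mid$ a unit in the generic case, hence $d_1 = \cdots = d_{r-1} = 1$ and the first $r-1$ elementary divisors are all $1$. This forces the last elementary divisor to absorb the entire determinant. For the exceptional case $r \equiv 2 \pmod 4$, I would instead show that every $(r-1)\times(r-1)$ minor is divisible by $1+\omega$ while some $(r-2)\times(r-2)$ minor is a unit, so $d_{r-2}=1$ but $d_{r-1}$ is an associate of $1+\omega$; then $\alpha_{r-1} = 1+\omega$ and $\alpha_r = \det(L_r)/(1+\omega)$, matching the stated formula.

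The main obstacle will be pinning down exactly when the factor $1+\omega$ (equivalently, the prime above $3$ in $\E$, since $N(1+\omega)=3$) divides all the top-size-minus-one minors versus when it does not. This is a congruence computation modulo the prime $(1+\omega)$, and it depends delicately on the parity of $r$ (through the choice $\alpha = \omega$ versus $\alpha = \omega^2$) and on $r \bmod 4$ (through how the Fibonacci quantity $f_{2r+2}-f_{2r-2}-1$ factors). I expect the cleanest route is to reduce $L_r$ modulo $(1+\omega)$ and analyze the rank of the resulting matrix over the field $\E/(1+\omega) \cong \F_3$: the corank of $L_r \bmod (1+\omega)$ tells us how many elementary divisors are divisible by $1+\omega$, and one checks that this corank is $1$ precisely when $r \equiv 2 \pmod 4$ and $0$ otherwise. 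Combining the value of $\det(L_r)$, the triviality of the first $r-1$ divisors (up to the single possible $1+\omega$), and the divisibility chain $\alpha_i \mid \alpha_{i+1}$ then yields the two cases of the stated SNF, and the Jacobian follows immediately via Corollary~\ref{Jac-Smith}.
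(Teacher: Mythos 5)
Your outline has the right skeleton (compute $\det L_r$, pin down $d_{r-1}(L_r)$, and let the divisibility chain do the rest), but both of your key technical claims are false. In the generic case, deleting the first row and last column of $L_r$ does \emph{not} leave a triangular matrix: that deletion removes the corner entry $\alpha$ at position $(1,r)$ but not the corner entry $\overline{\alpha}$ at position $(r,1)$, and the resulting minor equals $1+(-1)^r\overline{\alpha}f_{2r-2}$, whose norm is $f_{2r-2}^2+f_{2r-2}+1>1$. No deletion of a single row and a single column can remove both corner entries, and in fact no unit $(r-1)\times(r-1)$ minor exists at all in the first nontrivial case: for $r=3$ the nine $2\times 2$ minors of $L_3$ have norms $64$ and $13$ only. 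This is precisely why the paper's key step is a gcd of \emph{two} non-unit minors: $d_{r-1}(L_r)$ divides $\gcd\bigl(L_r(r,1),L_r(r,r)\bigr)=\gcd\bigl(1+(-1)^r\alpha f_{2r-2},\,f_{2r}\bigr)$, and this gcd is shown to be $1$ when $r\equiv 0,1,3\pmod 4$ and $1+\omega$ when $r\equiv 2\pmod 4$. (Your claim that a unit $(r-2)$-minor exists is correct, however: deleting rows $1,r$ and columns $r-1,r$ leaves a genuinely triangular matrix with $1$'s on the diagonal, since both corner rows are gone; so $d_{r-2}(L_r)=1$ is fine.)

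Second, your corank criterion is not only unproved but wrong, and it contradicts the very statement you are trying to prove. If the SNF for $r\equiv 2\pmod 4$ is $(1,\dots,1,1+\omega,\det L_r/(1+\omega))$, then since $\alpha_{r-1}\mid\alpha_r$, \emph{both} of the last two elementary divisors are divisible by $1+\omega$, so the corank of $L_r$ over $\E/(1+\omega)\cong\F_3$ is $2$, not $1$; for $r=2$ one even has $L_2\equiv 0\pmod{1+\omega}$ entrywise. A corank of $1$ would mean exactly one elementary divisor is divisible by $1+\omega$, which is consistent only with the generic shape $(1,\dots,1,\det L_r)$ --- so your test, applied as stated, would output the wrong SNF in the exceptional case. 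Finally, even after correcting this, a local analysis at the single prime $1+\omega$ cannot by itself yield $d_{r-1}=1$ (generic) or $d_{r-1}=1+\omega$ exactly (exceptional), because it gives no information at the other primes dividing $\det L_r$. Some global input is required, and the paper's gcd computation above is exactly that input: the two minors $L_r(r,1)$ and $L_r(r,r)$ have no common prime factor except possibly one of norm $3$.
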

The key step in the proof of the proposition is that \[
\gcd(L_r(r,1),L_r(r,r))=
\begin{cases} 
      1 &  r\equiv 0,1,3 \pmod{4} \\
      1+\omega & r\equiv 2  \pmod{4},
\end{cases}
\]where the greatest common divisor is computed in $\E$ and $L_r(i,j)$ denotes the subdeterminant of $L_r$ obtained by deleting the $i$-th row and $j$-th column. 
\end{ex}

\begin{ex} The Jacobian of $Q_{10}$.

The matriod $Q_{10}$ is a $10$-element, rank-$5$, self-dual matroid. It is a $\sqrt[6]{1}$-matroid, and is a splitter for the class of $\sqrt[6]{1}$- matroids; see Sage 10.0 Reference Manual \cite{sage}. 

The matroid $Q_{10}$ has $181$ bases; see the computer code. By Proposition~\ref{base}, $\det(MM^{\text H})=181$, where $M$ is any $5\times 10$ $\H$-matrix representing $Q_{10}$. Note that $181$ is a prime natural number and $181\equiv 1  \pmod{3}$, so $181$ can be factored into two conjugate primes in $\E$; see \cite{wikipedia}. By Theorem~\ref{SNF}, \[\begin{tikzcd}MM^{\text H}
\arrow{r}[above]{\text{SNF}} & (1,1,1,1,181),
\end{tikzcd}\]and hence $\Jac(Q_{10}) \cong_\E \E/(181)$.

\end{ex}



\section*{Acknowledgements}
We thank Tianyi Zhang for his assistance with several of the computer-aided examples, and Charles Semple and Zach Walsh for helpful discussions. 
The first author was supported by NSF grant DMS-2154224 and a Simons Fellowship in Mathematics.

\bibliography{SRU}
\bibliographystyle{plain}

\end{document}